\begin{document}
\title[\hfilneg \hfil Uniqueness of Meromorphic Functions With Respect To Their Shifts Concerning  Derivatives]
{Uniqueness of Meromorphic Functions With Respect To Their Shifts Concerning  Derivatives}

\author[XiaoHuang Huang \hfil \hfilneg]
{XiaoHuang Huang}

\address{XiaoHuang Huang: Corresponding author\newline
Department of Mathematics, 1 Department of Mathematics, Sun Yat-sen University, Guangzhou, 510275, P.R. China}
\email{1838394005@qq.com}

\subjclass[2010]{30D35, 39A46}
\keywords{Meromorphic functions; shifts; derivatives; small functions}

\begin{abstract}
  An example in the article shows that the first derivative of $f(z)=\frac{2}{1-e^{-2z}}$ sharing $0$ CM and $1,\infty$ IM with its shift $\pi i$ cannot obtain they are equal. In this paper, we study the uniqueness of meromorphic function  sharing small functions with their shifts concerning its $k-th$ derivatives. We improves the author's result \cite{h} from entire function to meromorphic function, the first derivative to its differential-difference polynomial, and also finite values to small functions. As for $k=0$, we obtain: Let $f(z)$ be a  transcendental meromorphic function of $\rho_{2}(f)<1$, let $c$ be a nonzero finite value, and let $a_{1}(z)\not\equiv\infty, a_{2}(z)\not\equiv\infty\in \hat{S}(f)$ be two distinct small functions of  $f(z)$ such that $a(z)$ is a periodic function with period $c$ and $b(z)$ is any small function of $f(z)$. If $f(z)$ and $f(z+c)$ share $a_{1}(z),\infty$ CM, and share $a_{2}(z)$ IM, then either $f(z)\equiv f(z+c)$ or $$e^{p(z)}\equiv \frac{f(z+c)-a_{1}(z+c)}{f(z)-a_{1}(z)}\equiv \frac{a_{2}(z+c)-a_{1}(z+c)}{a_{2}(z)-a_{1}(z)},$$
where $p(z)$ is a non-constant entire function of $\rho(p)<1$ such that $e^{p(z+c)}\equiv e^{p(z)}$.
\end{abstract}

\maketitle
\numberwithin{equation}{section}
\newtheorem{theorem}{Theorem}[section]
\newtheorem{lemma}[theorem]{Lemma}
\newtheorem{remark}[theorem]{Remark}
\newtheorem{corollary}[theorem]{Corollary}
\newtheorem{example}[theorem]{Example}
\newtheorem{problem}[theorem]{Problem}
\allowdisplaybreaks

\section{Introduction and main results}

Throughout this paper, we assume that the reader have a knowledge of  the fundamental results and the standard notations of the Nevanlinna value distribution theory. See(\cite{h3,y1,y2}). In the following, a meromorphic function $f$ means meromorphic in the whole complex plane. Define
 $$\rho(f)=\varliminf_{r\rightarrow\infty}\frac{log^{+}T(r,f)}{logr},$$
 $$\rho_{2}(f)=\varlimsup_{r\rightarrow\infty}\frac{log^{+}log^{+}T(r,f)}{logr}$$
by the order  and the hyper-order  of $f$, respectively. When $\rho(f)<\infty$, we say $f$ is of finite order.

By $S(r,f)$, we denote any quantity satisfying $S(r, f) = o(T(r, f))$, as $r\to \infty $ outside of a possible exceptional set of finite logarithmic measure. A meromorphic function $a(z)$ satisfying $T(r,a)=S(r,f)$ is called a small function of $f$.  We denote $S(f)$ as the family of all small meromorphic functions of $f$ which includes the constants in $\mathbb{C}$. Moreover, we define $\hat{S}(f)=S(f)\cup\{\infty\}$. We say that two non-constant meromorphic functions $f$ and $g$ share small function $a$ CM(IM) if $f-a$ and $g-a$ have the same zeros counting multiplicities (ignoring multiplicities). Moreover, we introduce the following notation: $S_{(m,n)}(a)=\{z|z $ is a common zero of $f(z+c)-a(z)$ and $f(z)-a(z)$ with multiplicities $m$ and $n$ respectively$\}$. $\overline{N}_{(m,n)}(r,\frac{1}{f-a})$ denotes the counting function of $f$ with respect to the set $S_{(m,n)}(a)$. $\overline{N}_{n)}(r,\frac{1}{f-a})$ denotes the counting function of all distinct zeros of $f-a$ with multiplicities at most $n$. $\overline{N}_{(n}(r,\frac{1}{f-a})$ denotes the counting function of all zeros of $f-a$ with multiplicities at least $n$.

We say that two non-constant meromorphic functions $f$ and $g$ share small function $a$ CM(IM)almost if
 $$N(r,\frac{1}{f-a})+N(r,\frac{1}{g-a})-2N(r,f=a=g)=S(r,f)+S(r,g),$$
or
$$\overline{N}(r,\frac{1}{f-a})+\overline{N}(r,\frac{1}{g-a})-2\overline{N}(r,f=a=g)=S(r,f)+S(r,g),$$
respectively.

For a meromorphic function $f(z)$, we denote its shift by $f_{c}(z)=f(z + c)$.

Rubel and Yang \cite{ruy}  studied the uniqueness of an entire function concerning its first order derivative, and proved the following result.

\

{\bf Theorem A} \ Let $f(z)$ be a non-constant entire function, and let $a, b$ be two finite distinct complex values. If $f(z)$ and $f'(z)$
 share $a, b$ CM, then $f(z)\equiv f'(z)$.

Zheng and Wang \cite{zw} improved Theorem A and proved

\

{\bf Theorem B} \ Let $f(z)$ be a non-constant entire function, and let $a(z)\not\equiv\infty, b(z)\not\equiv\infty$ be two distinct small functions of $f(z)$. If $f(z)$ and $f^{(k)}(z)$ share $a(z), b(z)$ CM, then $f(z)\equiv f^{(k)}(z)$.

Li  and Yang  \cite {ly3} improved Theorem B and proved

\

{\bf Theorem C} \ Let $f(z)$ be a non-constant entire function, and let $a(z)\not\equiv\infty, b(z)\not\equiv\infty$ be two  distinct small functions of $f(z)$. If $f(z)$ and $f^{(k)}(z)$
 share $a(z)$ CM, and share $b(z)$ IM. Then $f(z)\equiv f^{(k)}(z)$.

Recently, the value distribution of meromorphic functions concerning difference analogue has become a popular research, see [1-9,  12-14, 16-18].
 Heittokangas et al \cite{hkl} obtained a similar result analogue of Theorem A concerning shifts.

\

{\bf Theorem D}
 Let $f(z)$ be a non-constant entire function of finite order, let $c$ be a nonzero finite complex value, and let $a, b$ be two finite distinct complex values.
If $f(z)$ and $f(z+c)$ share $a, b$ CM, then $f(z)\equiv f(z+c).$

In 2022, Huang\cite{h} obtained

\

{\bf Theorem E}
 Let $f(z)$ be a  transcendental entire function of finite order, let $\eta\neq0$ be a finite complex number, $n\geq1, k\geq0$  two  integers and let $a, b$ be two  distinct finite complex values. If $f(z)$ and $(\Delta_{\eta}^{n}f(z))^{(k)}$ share $a_{1}$ CM and share $a_{2}$ IM, then either $f(z)\equiv(\Delta_{\eta}^{n}f(z))^{(k)}$ or $a_{1}=2a_{2}$,
$$f(z)\equiv a_{2}e^{2(cz+d)}-2a_{2}e^{cz+d}+2a_{2},$$
and
$$(\Delta_{\eta}^{n}f(z))^{(k)}\equiv a_{2}e^{cz+d},$$
where $c=(-2)^{-\frac{n+1}{k}}$ for $k\geq1$and $d$ are two finite constants.

In the following, we define the differential polynomials of a meromorphic function. Let $f(z)$  be a  non-constant entire function  and 
\begin{align}
g(z)=b_{-1}+\sum_{i=0}^{n}b_{i}f^{(k_{i})}(z+c_{i}),
\end{align}
where $b_{-1}$ and $b_{i} (i=0\ldots,n)$ are small meromorphic functions  of $f$, $k_{i}\geq0 (i=0\ldots,n)$ are integers and $c_{i}(i=0\ldots,n)$ are finite complex numbers.

Of above theorem, it's naturally to ask whether the condition two finite complex numbers can be replaced by two distinct small functions, and $f'$ can be replaced by $g$?

In this article, we give a positive answer. In fact, we prove the following more general result.

\

{\bf Theorem 1}
Let $f(z)$ be a transcendental meromorphic function of $\rho_{2}(f)<1$ such that $\overline{N}(r,f)=S(r,f)$, let $g(z)$ be a differential polynomials of $f$ as define in (1.1), and   let $a_{1}, a_{2}$ be two distinct finite complex numbers. If $f(z)$ and $g(z)$ share $a_{1}, \infty$ CM, and  $a_{2}$ IM. Then either $f(z)\equiv g(z)$ or 
$$f(z)\equiv a_{2}+(a_{1}-a_{2})(h-1)^{2},$$
and
$$g(z)\equiv a_{1}+(a_{1}-a_{2})(h-2),$$
where $h(z)$ is a non-constant meromorphic function of $\rho_{2}(h)<1$.

\

{\bf Remark 1} When $a_{1}$ and $a_{2}$ are two distinct small functions of $f$, it is easy to see in the following proofs that Lemma 2.4 and Lemma 2.5 are still true under the assumptions that $N(r,a_{1})+N(r,a_{2})+\overline{N}(r,f)=S(r,f)$ and $f$ and $g$ share $a_{1}$ CM almost and $a_{2}$ IM almost. So we can know that Theorem 1 is still true when  $f$ and $g$ share $a_{1}$ CM almost and $a_{2}$ IM almost.

\

{\bf Corollary 1} Let $f(z)$ be a  transcendental meromorphic function of $\rho_{2}(f)<1$, let $c$ be a nonzero finite value, $k$ be a positive integer, and let $a(z)\not\equiv\infty, b(z)\not\equiv\infty\in \hat{S}(f)$ be two distinct small functions. If $f^{(k)}(z+c)$ and $f(z)$ share $a(z),\infty$ CM, and share $b(z)$ IM, then $f^{(k)}(z)\equiv f(z+c)$.

\

{\bf Example 1} \cite{hf}
Let $f(z)=\frac{2}{1-e^{-2z}}$, and let $c=\pi i$. Then $f'(z)$ and $f(z+c)$ share $0$ CM and share $1,\infty$ IM, but $f'(z)\not\equiv f(z+c)$.

This example shows that for meromorphic functions, the conclusion of Theorem 1 doesn't hold even when  sharing $\infty$ CM is replaced by sharing $\infty$ IM when $k=1$. We believe there are examples for any $k$, but we can not construct them.

As for $k=0$, Li and Yi \cite{ly} obtained

\

{\bf Theorem F} Let $f(z)$ be a  transcendental entire function of $\rho_{2}(f)<1$, let $c$ be a nonzero finite value,  and let $a(z)\not\equiv\infty, b(z)\not\equiv\infty\in \hat{S}(f)$ be two distinct small functions. If $f(z)$ and $f(z+c)$ share $a(z)$  and  $b(z)$ IM, then $f(z)\equiv f(z+c)$.

\

{\bf Remark 2} Theorem F holds when $f(z)$ is a non-constant meromorphic function of $\rho_{2}(f)<1$ such that $\overline{N}(r,f)=S(r,f)$.

Heittokangas, et. \cite{hkl1} proved.
\

{\bf Theorem G} Let $f(z)$ be a  non-constant meromorphic function of $\rho_{2}(f)<1$, let $c$ be a nonzero finite value, and let $a_{1}(z)\not\equiv\infty$, $a_{2}(z)\not\equiv\infty$  and $a_{3}(z)\not\equiv\infty\in \hat{S}(f)$ be three distinct small functions  such that $a_{1}(z)$, $a_{2}(z)$ and $a_{3}(z)$ are periodic functions with period $c$. If $f(z)$ and $f(z+c)$ share $a_{1}(z),a_{2}(z)$ CM, and $a_{3}(z)$ IM, then $f(z)\equiv f(z+c)$.

We can ask a question that whether the small periodic function $a_{3}(z)$ of $f(z)$ can be replaced by any small function of $f(z)$?\\
In this paper, we obtain our second result.

\

{\bf Theorem 2} Let $f(z)$ be a  transcendental meromorphic function of $\rho_{2}(f)<1$, let $c$ be a nonzero finite value, and let $a_{1}(z)\not\equiv\infty, a_{2}(z)\not\equiv\infty\in \hat{S}(f)$ be two distinct small functions of  $f(z)$ such that $a(z)$ is a periodic function with period $c$ and $b(z)$ is any small function of $f(z)$. If $f(z)$ and $f(z+c)$ share $a_{1}(z),\infty$ CM, and share $a_{2}(z)$ IM, then either $f(z)\equiv f(z+c)$ or $$e^{p(z)}\equiv \frac{f(z+c)-a_{1}(z+c)}{f(z)-a_{1}(z)}\equiv \frac{a_{2}(z+c)-a_{1}(z+c)}{a_{2}(z)-a_{1}(z)},$$
where $p(z)$ is a non-constant entire function of $\rho(p)<1$ such that $e^{p(z+c)}\equiv e^{p(z)}$.

We can obtain the following corollary from the proof of Theorem 2.

\

{\bf Corollary 1}  Under the same condition as in Theorem 2, then $f(z)\equiv f(z+c)$ holds if one of conditions satisfies\\
(i) $a_{2}(z)$ is a periodic function with period $c$ or $2c$;\\
(ii) $\rho(a_{2}(z))<\rho(e^{p(z)})$;\\
(iii) $\rho(a_{2}(z))<1$.

\

{\bf Example 2} \ Let $f(z)=\frac{e^{z}}{1-e^{-2z}}$, and let $c=\pi i$. Then $f(z+c)=\frac{-e^{z}}{1-e^{-2z}}$, and $f(z)$ and $f(z+c)$ share $0,\infty$ CM, but $f(z)\not\equiv f(z+c)$.

\

{\bf Example 3} \ Let $f(z)=e^{z}$, and let $c=\pi i$. Then $f(z+c)=-e^{z}$, and $f(z)$ and $f(z+c)$ share $0,\infty$ CM, $f(z)$ and $f(z+c)$ attain
different values everywhere in the complex plane, but $f(z)\not\equiv f(z+c)$.

Above two examples of  show that "2CM+1IM" is necessary.

\

{\bf Example 4} Let $f(z)=e^{e^{z}}$,  then $f(z+\pi i)=\frac{1}{e^{e^{z}}}$. It is easy to verify that $f(z)$ and $f(z+\pi i)$ share $0, 1, \infty$ CM, but $f(z)=\frac{1}{f(z+\pi i)}$. On the other hand, we  obtain $f(z)=f(z+2\pi i)$.

Example 4 tells us that if we drop the assumption  $\rho_{2}(f)<1$, we can get another relation.

By Theorem 1 and Theorem 2, we still believe the latter situation of Theorem 2 can be removed, that is to say,  only the case $f(z)\equiv f(z+c)$ occurs. So we raise a conjecture here.

\

{\bf Conjecture} Under the same condition as in Theorem 2, is  $f(z)\equiv f(z+c)$ ?

\section{Some Lemmas}
\begin{lemma}\label{1}\cite{h3} Let $f$ be a non-constant meromorphic function of $\rho_{2}(f)<1$,  and let $c$ be a non-zero complex number. Then
$$m(r,\frac{f(z+c)}{f(z)})=S(r, f),$$
for all r outside of a possible exceptional set E with finite logarithmic measure.
\end{lemma}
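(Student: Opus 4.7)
The plan is to establish this shift analogue of the logarithmic derivative lemma via a refined Poisson--Jensen argument. First I would fix $R>|z|+|c|$ and apply Poisson--Jensen to represent
$$\log|f(z)| = \frac{1}{2\pi}\int_{0}^{2\pi}\Re\!\left(\frac{Re^{i\theta}+z}{Re^{i\theta}-z}\right)\log|f(Re^{i\theta})|\,d\theta - \sum_{|a_\mu|<R}\log\left|\frac{R^2-\bar{a}_\mu z}{R(z-a_\mu)}\right| + \sum_{|b_\nu|<R}\log\left|\frac{R^2-\bar{b}_\nu z}{R(z-b_\nu)}\right|,$$
where $a_\mu, b_\nu$ run over the zeros and poles of $f$ inside $|w|<R$. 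I would then write the analogous identity with $z$ replaced by $z+c$, subtract, and estimate $\log\bigl|f(z+c)/f(z)\bigr|$ termwise. The Poisson kernel difference contributes a term of size at most $C|c|r(R-r)^{-2}\bigl(T(R,f)+|\log|f(0)||\bigr)$, while the zero/pole sums contribute expressions controlled by $n(R,f)+n(R,1/f)$ together with elementary logarithmic factors coming from $|z-a_\mu|$ and $|z+c-a_\mu|$.

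Next I would integrate $\log^{+}\!\bigl|f(z+c)/f(z)\bigr|$ around $|z|=r$ to pass from pointwise bounds to a bound on $m(r,f(z+c)/f(z))$. The key step specific to the hyper-order hypothesis is the choice $R=r+s(r)$ with a carefully selected slowly growing function $s(r)$; one needs simultaneously $T(R,f)\le(1+o(1))T(r,f)$ and the integrated zero/pole contributions to be $o(T(r,f))$. For $\rho_{2}(f)<1$ the appropriate version of the growth lemma (Halburd--Korhonen--Tohge) yields, for any $\varepsilon>0$, $T\!\bigl(r+r/(\log r)^{1+\varepsilon},f\bigr)=T(r,f)+o(T(r,f))$ for all $r$ outside a set of finite logarithmic measure, and a parallel statement for the integrated counting function of zeros/poles in the annulus $r<|w|<R$. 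Feeding these two estimates into the Poisson--Jensen bookkeeping gives the desired $m(r,f(z+c)/f(z))=o(T(r,f))=S(r,f)$.

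The main obstacle I expect is the growth lemma in the hyper-order regime: the naïve Borel-type argument only delivers $T(r+|c|,f)=(1+o(1))T(r,f)$ when $f$ has finite order, so one has to work harder to control the exceptional set's logarithmic measure and to allow $R-r$ to shrink with $r$. Once this refined growth lemma is in hand, the exceptional sets arising from the $a_\mu$, $b_\nu$ discs of bad approximation can be absorbed into a single set of finite logarithmic measure, and the routine (though delicate) estimation of the Poisson kernel difference and the remainder sums closes the proof.
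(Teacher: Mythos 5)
The paper does not prove this lemma at all: it is quoted verbatim from the cited reference \cite{h3} (Halburd--Korhonen--Tohge), and your outline is precisely the argument given there and in the earlier Chiang--Feng and Halburd--Korhonen papers --- Poisson--Jensen applied at $z$ and $z+c$, termwise estimation of the kernel difference and of the zero/pole sums, integration over $|z|=r$, and the Borel-type growth lemma $T(r+s,f)=(1+o(1))T(r,f)$ outside a set of finite logarithmic measure, which is exactly where the hypothesis $\rho_{2}(f)<1$ enters. Your sketch is therefore the standard proof of the cited result; the only caveat is that it remains an outline, with the two genuinely delicate points (the quantitative form of the hyper-order growth lemma and the integrated bounds on the zero/pole sums) correctly identified but deferred to the literature rather than carried out.
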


\begin{lemma}\label{2}\cite{h3}  Let $f$ be a non-constant meromorphic function of $\rho_{2}(f)<1$,  and let $c$ be a non-zero complex number. Then
$$T(r,f(z))=T(r,f(z+c))+S(r,f),$$
for all r outside of a possible exceptional set E with finite logarithmic measure.
\end{lemma}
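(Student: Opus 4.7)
The plan is to derive the equality by splitting $T(r,f(z+c))$ into its proximity and counting parts and bounding each piece using Lemma 1 (the shifted logarithmic derivative estimate) together with a translation estimate for the characteristic function. Concretely, I would show $T(r,f(z+c)) \le T(r,f(z)) + S(r,f)$ and then get the reverse inequality by symmetry, replacing $c$ by $-c$.

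For the proximity piece I would factor $f(z+c) = f(z)\cdot \frac{f(z+c)}{f(z)}$ and apply the submultiplicativity of $m$ to obtain
$$m(r,f(z+c)) \le m(r,f(z)) + m\!\left(r,\frac{f(z+c)}{f(z)}\right) = m(r,f(z)) + S(r,f),$$
where the last equality is exactly Lemma 1. For the counting piece, note that $z_0$ is a pole of $f(z+c)$ of multiplicity $p$ if and only if $z_0 + c$ is a pole of $f$ of multiplicity $p$, so that counting poles of $f(z+c)$ in $|z| \le r$ amounts to counting poles of $f$ in $|w - c| \le r$. This yields
$$N(r - |c|, f) - O(\log r)\ \le\ N(r, f(z+c))\ \le\ N(r + |c|, f) + O(\log r).$$
To replace the shifted radii by $r$ up to an $S(r,f)$ error, I would invoke the Borel-type growth lemma available under $\rho_2(f) < 1$, namely $T(r + |c|, f) = T(r,f) + S(r,f)$ outside a set of finite logarithmic measure, which in particular implies $N(r \pm |c|, f) = N(r, f) + S(r,f)$ off the exceptional set.

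Adding the two estimates gives $T(r,f(z+c)) \le T(r,f(z)) + S(r,f)$. Applying the same argument to the meromorphic function $g(w) := f(w-c)$, which satisfies $\rho_2(g) = \rho_2(f) < 1$ and $S(r,g) = S(r,f)$, and noting $f(z) = g(z+c)$, supplies the reverse inequality, so the two characteristics agree up to an $S(r,f)$ error off an exceptional set of finite logarithmic measure.

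The main technical obstacle is the careful handling of exceptional sets and the justification of the translation identity $T(r+|c|,f) = T(r,f) + S(r,f)$; this is exactly where the hyper-order hypothesis $\rho_2(f) < 1$ enters and is the same growth-lemma technology that underpins Lemma 1 in \cite{h3}, so the proof is essentially a bookkeeping argument built on top of that one input.
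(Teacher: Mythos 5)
The paper states this lemma purely as a citation to Halburd--Korhonen--Tohge \cite{h3} and gives no proof of its own; your argument is the standard one from that reference (split $T(r,f(z+c))$ into proximity and counting parts, control $m$ via Lemma 1, control $N$ via the translation of poles together with the Borel-type growth lemma for functions of hyper-order less than one, then symmetrize with $c\mapsto -c$), and it is correct in substance. The one step to tighten is $N(r\pm|c|,f)=N(r,f)+S(r,f)$: this does not formally follow from $T(r+|c|,f)=T(r,f)+S(r,f)$ alone, since $m(r,f)$ need not be monotone in $r$, but it is obtained by applying the same growth lemma directly to the non-decreasing function $N(r,f)$, whose hyper-order is at most $\rho_{2}(f)<1$.
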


\begin{lemma}\label{3}\cite{hk3,y1,y2} Let $f_{1}$ and $f_{2}$ be two non-constant meromorphic functions in $|z|<\infty$, then
$$N(r,f_{1}f_{2})-N(r,\frac{1}{f_{1}f_{2}})=N(r,f_{1})+N(r,f_{2})-N(r,\frac{1}{f_{1}})-N(r,\frac{1}{f_{2}}),$$
where $0<r<\infty$.
\end{lemma}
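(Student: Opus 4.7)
The plan is to reduce this identity to the elementary divisor additivity $\mathrm{ord}_{z_0}(f_1f_2)=\mathrm{ord}_{z_0}(f_1)+\mathrm{ord}_{z_0}(f_2)$ holding at every point $z_0\in\mathbb{C}$, where $\mathrm{ord}_{z_0}(F)$ denotes the order of $F$ at $z_0$ (positive if $z_0$ is a zero of $F$, negative if a pole, zero otherwise). The displayed identity is essentially the statement that the $\mathbb{Z}$-valued quantity $N(r,\cdot)-N(r,1/\cdot)$ is additive on products, so the entire argument should be formal once divisors are in hand.

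First I would work at the level of the unintegrated counting functions. Splitting the sum $\sum_{|z_0|\le r}\mathrm{ord}_{z_0}(F)$ into its positive and negative parts gives the pointwise identity
$$n(r,1/F)-n(r,F)=\sum_{|z_0|\le r}\mathrm{ord}_{z_0}(F),$$
where $n(r,F)$ and $n(r,1/F)$ count poles and zeros of $F$ in $|z|\le r$ with multiplicity. Adding this identity for $f_1$ and $f_2$ and comparing with the same identity applied to $F=f_1f_2$, divisor additivity instantly yields
$$n(r,f_1f_2)-n\!\left(r,\tfrac{1}{f_1f_2}\right)=n(r,f_1)+n(r,f_2)-n\!\left(r,\tfrac{1}{f_1}\right)-n\!\left(r,\tfrac{1}{f_2}\right).$$

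The second and final step is to pass from $n$ to $N$ through the defining formula $N(r,F)=\int_0^r\frac{n(t,F)-n(0,F)}{t}\,dt+n(0,F)\log r$. Integrating the pointwise identity over $t\in(0,r]$ and using that the same additive relation holds at $z_0=0$ (which governs the $n(0,\cdot)\log r$ terms) causes all contributions to combine linearly into the claimed equality. The only point requiring care, and really the sole bookkeeping obstacle, is the treatment of the origin: one must track possible zeros or poles of $f_1$, $f_2$ at $z=0$ consistently through the definition of $N$ so that the $n(0,\cdot)\log r$ pieces on the two sides agree. There is no analytic difficulty; the lemma is a purely formal consequence of divisor additivity combined with the standard construction of $N$ from $n$.
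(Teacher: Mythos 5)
Your argument is correct. The paper itself offers no proof of this lemma --- it is quoted from the cited monographs --- so there is nothing internal to compare against; in those sources the identity is usually obtained as a corollary of Jensen's formula, since $N(r,\tfrac1F)-N(r,F)=\frac{1}{2\pi}\int_0^{2\pi}\log|F(re^{i\theta})|\,d\theta-\log|c_F|$ (with $c_F$ the leading Laurent coefficient of $F$ at the origin) and both the integral and $\log|c_F|$ are additive in $F$. Your route is more elementary: you never invoke Jensen, only the pointwise divisor additivity $\mathrm{ord}_{z_0}(f_1f_2)=\mathrm{ord}_{z_0}(f_1)+\mathrm{ord}_{z_0}(f_2)$ together with the identity $n(r,\tfrac1F)-n(r,F)=\sum_{|z_0|\le r}\mathrm{ord}_{z_0}(F)$, which correctly absorbs the cancellation that occurs when a zero of $f_1$ meets a pole of $f_2$ (the reason the individual counting functions of the product are not additive while their difference is). Your handling of the origin is also the right point to flag: the relation $n(0,\tfrac1F)-n(0,F)=\mathrm{ord}_0(F)$ is itself additive, so the $n(0,\cdot)\log r$ terms in the definition of $N$ combine exactly as the integrands do. The only cosmetic remark is that the conclusion also holds (trivially, by the same computation) when $f_1f_2$ degenerates to a constant, a case your divisor identity covers without modification.
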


\begin{lemma}\label{4} Let $f(z)$ be a transcendental meromorphic function of $\rho_{2}(f)<1$ such that $\overline{N}(r,f)=S(r,f)$, let $g(z)$ be a differential polynomials of $f$ as define in (1.1), and   let $a_{1}, a_{2}$ be two distinct finite complex numbers. If
$$H=\frac{f‘}{(f-a_{1})(f-a_{2})}-\frac{g’}{(g-a_{1})(g-a_{2})}\equiv0,$$
 And  $f$ and $g$ share $a_{1}$ CM, and  $a_{2}$ IM, then either $2T(r,f)\leq\overline{N}(r,\frac{1}{f-a_{1}})+\overline{N}(r,\frac{1}{f-a_{2}})+S(r,f)$, or $f=g$.
\end{lemma}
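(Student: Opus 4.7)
The strategy is to integrate the hypothesis $H\equiv 0$ and read off a Möbius relation between $f$ and $g$. Set
$$\psi=\frac{(f-a_{1})(g-a_{2})}{(f-a_{2})(g-a_{1})}.$$
A direct logarithmic-derivative computation, regrouping the four terms of $(\log\psi)'$ into the two partial-fraction pairs for $(a_{1},a_{2})$, gives $\psi'/\psi=(a_{1}-a_{2})H\equiv 0$, so $\psi$ reduces to a nonzero constant $K$. Transcendentality of $f$ rules out $K=0$ and $K=\infty$, and in the subcase $K=1$, cross-multiplying $(f-a_{1})(g-a_{2})=(f-a_{2})(g-a_{1})$ yields at once $f\equiv g$, which is the second alternative of the lemma.

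Assume from now on $K\notin\{0,1\}$. Solving $\psi\equiv K$ for $f$ as a Möbius function of $g$ shows that $f\to\alpha:=(Ka_{2}-a_{1})/(K-1)$ as $g\to\infty$, and one checks directly that $\alpha$ is distinct from both $a_{1}$ and $a_{2}$ (since $K\neq 0,\infty$ and $a_{1}\neq a_{2}$). A short local computation at a pole of $g$ of order $m$ shows that $f-\alpha$ has a zero there of exactly the same order, and conversely, so
$$\overline{N}\!\left(r,\tfrac{1}{f-\alpha}\right)=\overline{N}(r,g).$$
Because $g$ is the differential-difference polynomial in (1.1) with small coefficients, its distinct poles lie either over poles of $b_{-1},b_{i}$ (contributing $S(r,f)$) or over poles of the shifts $f(z+c_{i})$. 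The hyper-order-less-than-one shift estimates (Lemma 2.2 together with its distinct-counting-function analogue) give $\overline{N}(r,f(z+c_{i}))=\overline{N}(r,f)+S(r,f)$, so by the hypothesis $\overline{N}(r,f)=S(r,f)$ we obtain $\overline{N}(r,g)=S(r,f)$, and hence $\overline{N}(r,1/(f-\alpha))=S(r,f)$.

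It remains to apply Nevanlinna's second main theorem to $f$ at the three distinct finite values $a_{1},a_{2},\alpha$ together with $\infty$:
$$2T(r,f)\leq \overline{N}(r,f)+\overline{N}\!\left(r,\tfrac{1}{f-a_{1}}\right)+\overline{N}\!\left(r,\tfrac{1}{f-a_{2}}\right)+\overline{N}\!\left(r,\tfrac{1}{f-\alpha}\right)+S(r,f),$$
and the two ``free'' terms $\overline{N}(r,f)$ and $\overline{N}(r,1/(f-\alpha))$ are each $S(r,f)$, yielding the claim. The principal obstacle is the production of this auxiliary third value $\alpha$ together with the transfer of the smallness of $\overline{N}(r,f)$ across the shift-and-differentiate operations defining $g$; this is precisely where the assumptions $\overline{N}(r,f)=S(r,f)$ and $\rho_{2}(f)<1$ are used.
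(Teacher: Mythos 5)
Your proposal is correct and follows essentially the same route as the paper: integrating $H\equiv 0$ to get the constant Möbius relation $\frac{(f-a_{1})(g-a_{2})}{(f-a_{2})(g-a_{1})}\equiv K$, dispatching $K=1$ as $f\equiv g$, and otherwise introducing the auxiliary value $\alpha=(Ka_{2}-a_{1})/(K-1)$ whose $f$-preimages are poles of $g$, so that $\overline{N}(r,\frac{1}{f-\alpha})\leq\overline{N}(r,g)=S(r,f)$ and the second main theorem with $a_{1},a_{2},\alpha,\infty$ gives the stated inequality. You in fact supply details the paper leaves implicit (that $\alpha\neq a_{1},a_{2}$ and that $\overline{N}(r,g)=S(r,f)$ via the shift counting-function estimate), but the argument is the same.
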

\begin{proof}
Integrating $H$ which leads to
$$\frac{g-a_{2}}{g-a_{1}}=C\frac{f-a_{2}}{f-a_{1}},$$
where $C$ is a nonzero constant.\\

If $C=1$, then $f=g$. If $C\neq1$, then from above, we have
$$\frac{a_{1}-a_{2}}{g-a_{1}}\equiv \frac{(C-1)f-Ca_{2}+a_{1}}{f-a_{1}},$$
and
$$T(r,f)=T(r,g)+S(r,f)+S(r,g).$$
It follows that $N(r,\frac{1}{f-\frac{Ca_{2}-a_{1}}{C-1}})=N(r,\frac{1}{a_{1}-a_{2}})=S(r,f)$. Then by Lemma 2.4,
\begin{eqnarray*}
\begin{aligned}
2T(r,f)&\leq \overline{N}(r,f)+\overline{N}(r,\frac{1}{f-a_{1}})+\overline{N}(r,\frac{1}{f-a_{2}})+\overline{N}(r,\frac{1}{f-\frac{Ca_{2}-a_{1}}{C-1}})+S(r,f)\\
&\leq \overline{N}(r,\frac{1}{f-a_{1}})+\overline{N}(r,\frac{1}{f-a_{2}})+S(r,f),
\end{aligned}
\end{eqnarray*}
that is $2T(r,f)\leq \overline{N}(r,\frac{1}{f-a_{1}})+\overline{N}(r,\frac{1}{f-a_{2}})+S(r,f)$.
\end{proof}

\begin{lemma}\label{5} Let $f(z)$ be a transcendental meromorphic function of $\rho_{2}(f)<1$ such that $\overline{N}(r,f)=S(r,f)$, let $g(z)$ be a differential polynomials of $f$ as define in (1.1), and   let $a_{1}, a_{2}$ be two distinct finite complex numbers. If $f(z)$ and $g(z)$ share $a_{1}$ CM, and $N(r,\frac{1}{g(z)-(b_{-1}+b_{0}a_{1})})=S(r,f)$, then Then there are two meromorphic functions  $h$ and $H$ on $\mathbb{C}^{n}$ such that either $g=Hh+G$, where $H\not\equiv0$ and $G=b_{-1}+b_{0}a_{1}$ are two small functions of $h$, or $T(r,h)=S(r,f)$.
\end{lemma}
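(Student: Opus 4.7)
The plan is to write $g-G$ essentially as an exponential times a small factor via a Hadamard-type factorization, and then dichotomize according to the size of the exponential piece.

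First I would set $\tilde g(z):=g(z)-G(z)=g(z)-b_{-1}(z)-b_0(z)a_1$ and collect the zero/pole data of $\tilde g$. The hypothesis gives $N(r,1/\tilde g)=S(r,f)$. The poles of $\tilde g$ are controlled through the differential polynomial structure: since $\overline{N}(r,f)=S(r,f)$ and Lemma 2.2 transfers this to $f(z+c_i)$, every pole of $f^{(k_i)}(z+c_i)$ sits over a pole of $f$, so $\overline{N}(r,\tilde g)=S(r,f)$; passing to multiplicities via the standard relation $N(r,f^{(k)})=N(r,f)+k\overline{N}(r,f)$ one controls $N(r,\tilde g)$ in terms of $N(r,f)$ up to $S(r,f)$.

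Next, since $\rho_2(f)<1$ yields $\rho_2(\tilde g)<1$, I would apply a Hadamard-type factorization valid in this hyper-order regime to write
$$\tilde g(z)=H(z)\,e^{\phi(z)},$$
where $\phi$ is entire and $H$ is a meromorphic function whose zeros and poles coincide with those of $\tilde g$. Using Jensen's formula together with the counting estimates from the previous paragraph, one obtains $T(r,H)=S(r,f)$. Setting $h:=e^{\phi}$ gives the identity $g=Hh+G$ with $H\not\equiv 0$ and $G=b_{-1}+b_0 a_1$. If $T(r,h)=S(r,f)$, the second alternative of the lemma is already recorded. Otherwise $T(r,h)\neq S(r,f)$, and along a suitable sequence $r_n\to\infty$ we have $T(r_n,H)+T(r_n,G)=S(r_n,f)=o(T(r_n,h))$, so $H$ and $G$ are small functions of $h$ in the sense used throughout the paper, which is the first alternative.

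The main obstacle is the first step: the assumption $\overline{N}(r,f)=S(r,f)$ controls only distinct poles of $f$, so without extra input the multiplicities of poles of $\tilde g$ could a priori inflate $N(r,\tilde g)$ and hence $T(r,H)$. I expect to resolve this by exploiting the CM sharing of $a_1$ by $f$ and $g$ (which matches pole multiplicities through $(g-a_1)/(f-a_1)$ having zeros and poles only where $f$ or $g$ has poles) and by invoking the hyper-order Hadamard factorization in the Chiang--Feng/Halburd--Korhonen framework, so that $e^{\phi}$ is allowed to carry the genuine exponential growth while $H$ remains small. Once this bookkeeping is settled, the decomposition and the dichotomy follow routinely.
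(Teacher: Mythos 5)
There is a genuine gap, and it begins with how you read the statement. The function $h$ in the conclusion is not yours to choose: it is the specific meromorphic function produced by the CM sharing of $a_{1}$ (and $\infty$), namely the one defined by $f-a_{1}=h(g-G)+h(G-a_{1})$, i.e.\ $h=(f-a_{1})/(g-a_{1})$, whose zeros and poles all come from the coefficients $b_{-1},b_{0},\dots,b_{n}$ and are therefore small. This is exactly how the lemma is consumed in the proof of Theorem~1, where $q=(g-a_{1})/(f-a_{1})$ is introduced and identified with $1/h$. Your construction replaces this $h$ by an exponential $e^{\phi}$ coming from a Hadamard-type factorization of $g-G$; the identity $g=He^{\phi}+G$ so obtained says nothing about $(f-a_{1})/(g-a_{1})$ and cannot be fed into the later argument. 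A symptom of the mismatch is that your dichotomy is vacuous --- you always land in the first alternative --- whereas the paper's dichotomy is a genuine case split: it substitutes $f-a_{1}=hQ+h(G-a_{1})$ (with $Q=g-G$) back into $g=b_{-1}+\sum b_{i}f^{(k_{i})}(z+c_{i})$, arrives at $1-\sum b_{i}\bigl(h(G-a_{1})\bigr)^{(k_{i})}/Q=Fh$ with $F=\sum b_{i}(hQ)^{(k_{i})}/(hQ)$ small by the logarithmic derivative lemma and the hypothesis $N(r,1/Q)=S(r,f)$, and then distinguishes whether $h^{-1}-F$ vanishes identically, using the second main theorem for $h^{-1}$ with the targets $0$, $\infty$, $F$ to eliminate the mixed case.

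Even on its own terms your factorization step does not close. The hypothesis $\overline{N}(r,f)=S(r,f)$ bounds only the reduced pole-counting function; the integrated counting function $N(r,f)$, and hence $N(r,g-G)$ (which differentiation only inflates further via $N(r,f^{(k)})=N(r,f)+k\overline{N}(r,f)$), can be comparable to $T(r,f)$. The zero--pole factor $H$ in your decomposition must then carry $N(r,H)=N(r,g-G)$, so $T(r,H)=S(r,f)$ fails in general. CM sharing of $a_{1}$ or of $\infty$ matches multiplicities between $f$ and $g$ at individual points but does not make $N(r,f)$ small, so the bookkeeping you defer to the end is not merely technical --- it is where the argument breaks. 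The paper avoids this entirely because only the ratio $(f-a_{1})/(g-a_{1})$ needs to have small zero and pole counting functions, not $g-G$ itself.
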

\begin{proof}
 Since $f(z)$ is a non-constant meromorphic function of $\rho_{2}(f)<1$, and $f(z)$ and $g(z)$ share $a_{1},\infty$ CM, then there is a meromorphic function $h$  such that
\begin{align}
f-a_{1}=h(g-G)+h(G-a_{1}),
\end{align}
where $G(z)=b_{-1}+b_{0}a_{1}$, and the zeros and poles of $h$ come from the zeros and poles of $b_{-1}$ and $b_{i}(i=0,1,\ldots,n)$

 Suppose on the contrary that $T(r,h)\neq S(r,f)$. Set $Q=g-G$. Do induction from (2.2) that
\begin{align}
Q=\sum_{i=0}^{n}b_{i}(hQ)^{(k_{i})}+\sum_{i=0}^{n}b_{i}(h(G-a_{1}))^{(k_{i})}.
\end{align}
Easy to see that $Q\not\equiv0$. Then we rewrite (2.3) as
\begin{eqnarray}
1-\frac{\sum_{i=0}^{n}b_{i}(h(G-a_{1}))^{(k_{i})}}{Q}=Fh,
\end{eqnarray}
where
\begin{align}
F&=\frac{\sum_{i=0}^{n}b_{i}(hQ)^{(k_{i})}}{hQ}
\end{align}
Note that $N(r,\frac{1}{g-G})=N(r,\frac{1}{Q})=S(r,f)$, and  hence
\begin{align}
T(r,F)&\leq\sum_{i=0}^{n}(T(r,\frac{(hQ)^{(k_{i})}}{Qh})+S(r,f)\notag\\
&\leq m(r,\frac{(hQ)^{(k_{i})}}{hQ})+N(r,\frac{(hQ)^{(k_{i})}}{hQ})+\overline{N}(r,f)\notag\\
&+S(r,h)+S(r,f)=S(r,h)+S(r,f).
\end{align}
By (2.1) and Lemma 2.1,  we get
\begin{align}
T(r,h)&\leq T(r,f)+T(r,g)+S(r,f)\notag\\
&\leq 2T(r,f)+S(r,f).
\end{align}
Then it follows from (2.5) that $T(r,F)=S(r,f)$. Next we discuss  two cases.

{\bf Case1.} \quad $h^{-1}-F\not\equiv0$. Rewrite (2.4) as
\begin{align}
h(h^{-1}-F)=\sum_{i=0}^{n}b_{i}(h(G-a_{1}))^{(k_{i})}.
\end{align}
We claim that $F\equiv0$. Otherwise, it follows from (2.9) that $N(r,\frac{1}{h^{-1}-F})=S(r,f)$. Then use Lemma 2.4 to $h$ we can obtain
\begin{align}
T(r,h)&=T(r,h^{-1})+O(1)\notag\\
&\leq \overline{N}(r,h^{-1})+\overline{N}(r,\frac{1}{h^{-1}})+\overline{N}(r,\frac{1}{h^{-1}-F})\notag\\
&+O(1)=S(r,f),
\end{align}
which contradicts with assumption. Thus $F\equiv0$. Then by (2.9) we get
\begin{align}
g=\sum_{i=0}^{n}b_{i}(h(G-a_{1})^{(k_{i})})+G=Hh+G,
\end{align}
where $H\not\equiv0$ is a small function of $h$.

{\bf Case2.} \quad $h^{-1}-F\equiv0$. Immediately, we get $T(r,h)=S(r,f)$.
\end{proof}

\begin{lemma}\label{6}\cite{y1}
Let $f$ be a non-constant meromorphic function, and $R(f)=\frac{P(f)}{Q(f)}$, where
$$P(f)=\sum_{k=0}^{p}a_{k}f^{k} \quad and \quad Q(f)=\sum_{j=0}^{q}a_{j}f^{q}$$
are two mutually prime polynomials in $f$. If the coefficients ${a_{k}}$ and ${b_{j}}$ are small functions of $f$ and $a_{p}\not\equiv0$, $b_{q}\not\equiv0$, then
$$T(r,R(f))=max\{p,q\}T(r,f)+S(r,f).$$
\end{lemma}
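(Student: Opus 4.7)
The plan is to prove $T(r, R(f)) = d\,T(r, f) + S(r, f)$ with $d = \max\{p, q\}$ by establishing the two inequalities, starting from the polynomial special case as a lemma.

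As a preliminary, I would first verify that $T(r, P(f)) = p\,T(r, f) + S(r, f)$ (and analogously for $Q$). The pointwise bound $|P(f)| \leq (p+1)\max_k|a_k|\,\max(1,|f|)^p$ integrates to $m(r, P(f)) \leq p\,m(r, f) + S(r, f)$, and poles of $P(f)$ arise only from poles of $f$ (with multiplicity factor $p$) or from poles of the small coefficients, hence $N(r, P(f)) \leq p\,N(r, f) + S(r, f)$. The reverse inequality follows by isolating the leading power, writing $a_p f^p = P(f) - \sum_{k<p} a_k f^k$, and applying the same kind of estimate.

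For the upper bound $T(r, R(f)) \leq d\,T(r, f) + S(r, f)$, I may assume $p \geq q$ via $T(r, R(f)) = T(r, 1/R(f)) + O(1)$. Factoring $Q(w) = b_q \prod_j (w - \beta_j)^{\mu_j}$ (passing to an algebraic extension of $S(f)$ if necessary, with coefficient contributions absorbed into $S(r,f)$), one has $\log^+|R(w)| \leq (p - q)\log^+|w| + \sum_j \mu_j\,\log^+(1/|w - \beta_j|) + O(\max_k\log^+|a_k|+\max_k\log^+|b_k|)$. Substituting $w = f$ and integrating yields $m(r, R(f)) \leq (p-q)\,m(r, f) + q\,T(r, f) + S(r, f)$, since each $m(r, 1/(f - \beta_j)) \leq T(r, f) + O(1)$. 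The counting function is bounded similarly: poles of $R(f)$ come from poles of $f$ (contributing $(p-q)N(r, f)$) or zeros of $Q(f)$ (contributing at most $N(r, 1/Q(f)) \leq q\,T(r, f) + S(r, f)$). Summing gives the upper bound.

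The main obstacle is the reverse inequality $T(r, R(f)) \geq d\,T(r, f) + S(r, f)$. Setting $g = R(f)$, for each constant $c$ outside the finite critical set of $R$ the polynomial $P(w) - c\,Q(w)$ has $d$ distinct simple roots $w_1(c), \ldots, w_d(c)$, and matching multiplicities (since $R'(w_i(c))\neq 0$) give
\[
N\!\left(r, \tfrac{1}{g - c}\right) = \sum_{i=1}^{d} N\!\left(r, \tfrac{1}{f - w_i(c)}\right) + S(r,f).
\]
Choosing $M$ distinct constants $c_1, \ldots, c_M$ so that the $dM$ roots $\{w_i(c_m)\}$ are pairwise distinct, and applying Nevanlinna's Second Main Theorem to $f$ with these $dM$ values produces $\sum_{m,i} m(r, 1/(f - w_i(c_m))) \leq 2\,T(r, f) + S(r, f)$. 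Combining with the First Main Theorem identities $N(r, 1/(f - w)) = T(r, f) - m(r, 1/(f - w)) + O(1)$ and $N(r, 1/(g - c_m)) \leq T(r, g) + O(1)$, then summing over $m$, gives $M\,T(r, g) \geq dM\,T(r, f) - 2\,T(r, f) + S(r, f)$, i.e., $T(r, g) \geq (d - 2/M)\,T(r, f) + S(r, f)$. Letting $M \to \infty$ produces the sharp lower bound and completes the proof.
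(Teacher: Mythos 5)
The paper does not prove this statement at all: it is the classical Valiron--Mokhon'ko identity, quoted from Yang--Yi \cite{y1}, where it is established by an elementary algebraic induction on the degrees (together with a splitting of the circle $|z|=r$ according to whether $|f|$ is large or bounded), with no appeal to any second main theorem. So your argument is necessarily a different route, and as written it has concrete gaps on both sides of the identity. For the upper bound, your two estimates do not sum to $\max\{p,q\}T(r,f)$: you bound $m(r,R(f))$ by $(p-q)m(r,f)+qT(r,f)+S(r,f)$ (via $m(r,1/(f-\beta_j))\leq T(r,f)$) and $N(r,R(f))$ by $(p-q)N(r,f)+qT(r,f)+S(r,f)$ (via $N(r,1/Q(f))\leq qT(r,f)$), and adding these gives $(p+q)T(r,f)+S(r,f)$, not $pT(r,f)+S(r,f)$; the contribution of the zeros of $Q(f)$ is counted twice, and the correct bookkeeping must pair $m(r,1/(f-\beta_j))$ with $N(r,1/(f-\beta_j))$ through the first main theorem so that each root contributes $\mu_jT(r,f)$ in total. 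Similarly, in your preliminary polynomial lemma, isolating $a_pf^p=P(f)-\sum_{k<p}a_kf^k$ only yields $pT(r,f)\leq T(r,P(f))+(p-1)T(r,f)+S(r,f)$, i.e.\ $T(r,P(f))\geq T(r,f)+S(r,f)$, which is far weaker than the required $T(r,P(f))\geq pT(r,f)+S(r,f)$; the genuine lower bound needs the two-set decomposition of the circle.

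For the lower bound in the rational case there are two further problems. First, the roots $\beta_j$ of $Q$ and $w_i(c)$ of $P-cQ$ are in general only algebroid (multi-valued) over $S(f)$, not single-valued meromorphic small functions, so a second main theorem with these as targets is not directly available; making ``pass to an algebraic extension of $S(f)$'' rigorous is itself a substantial task. Second, even granting that, the second main theorem for $dM$ small \emph{functions} is the Osgood--Steinmetz/Yamanoi theorem, a result much deeper than the lemma you are proving, and Steinmetz's proof of that theorem already uses the Valiron--Mokhon'ko identity, so the argument is circular unless you invoke Yamanoi's independent proof. Finally, letting $M\to\infty$ only gives, for each $\varepsilon>0$, an estimate valid outside an exceptional set depending on $\varepsilon$, which is strictly weaker than the single $S(r,f)$ error term asserted in the lemma. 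The standard elementary proof avoids all of these issues and is the one you should reproduce here.
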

\begin{lemma}\label{7}\cite{y1} Let $f$ be a non-constant meromorphic function, and let $P(f)=a_{0}f^{p}+a_{1}f^{p-1}+\cdots+a_{p}(a_{0}\neq0)$ be a polynomial of degree $p$ with constant coefficients $a_{j}(j=0,1,\ldots,p)$. Suppose that $b_{j}(j=0,1,\ldots,q)(q>p)$. Then
$$m(r,\frac{P(f)f'}{(f-b_{1})(f-b_{2})\cdots(f-b_{q})})=S(r,f).$$
\end{lemma}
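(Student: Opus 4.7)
The plan is to reduce the estimate to repeated applications of the classical logarithmic derivative lemma via partial fractions. Since $P(f)$ is a polynomial in $f$ of degree $p$ and the denominator $\prod_{j=1}^{q}(f-b_j)$ has degree $q>p$ in $f$ with distinct roots $b_1,\dots,b_q$, the rational function in $f$
\[
\frac{P(f)}{\prod_{j=1}^{q}(f-b_j)}
\]
is proper (no polynomial part) and admits a partial fraction decomposition
\[
\frac{P(f)}{\prod_{j=1}^{q}(f-b_j)} \;=\; \sum_{j=1}^{q}\frac{A_j}{f-b_j},
\]
where the constants $A_j$ are obtained in the usual way by evaluating $P(b_j)/\prod_{k\neq j}(b_j-b_k)$.

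Multiplying through by $f'$, the quantity whose proximity function we must bound becomes
\[
\frac{P(f)\,f'}{\prod_{j=1}^{q}(f-b_j)} \;=\; \sum_{j=1}^{q}A_j\,\frac{f'}{f-b_j}.
\]
I would then apply the lemma on the logarithmic derivative to each individual summand, which gives $m\!\bigl(r,\tfrac{f'}{f-b_j}\bigr)=S(r,f)$ for every $j$, and combine these bounds by the standard subadditivity of the proximity function:
\[
m\!\left(r,\frac{P(f)f'}{\prod_{j=1}^{q}(f-b_j)}\right) \;\le\; \sum_{j=1}^{q}m\!\left(r,\frac{f'}{f-b_j}\right)+O(1)\;=\;S(r,f).
\]

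There is essentially no real obstacle here; the content of the proof lies in noticing that the degree assumption $q>p$ is exactly what one needs to avoid a polynomial-in-$f$ remainder after the partial-fraction step (which would contribute a term of the form $\tfrac{Q(f)f'}{\cdot}$ whose proximity we could not control). The only bookkeeping item worth flagging is the implicit hypothesis that the $b_j$ be pairwise distinct, since otherwise the partial fraction expansion would contain higher-order terms $A_{j,m}/(f-b_j)^m$; these can also be handled by the logarithmic derivative lemma (rewriting $f'/(f-b_j)^m$ in terms of a derivative of $(f-b_j)^{-(m-1)}$), but under the present formulation distinctness suffices.
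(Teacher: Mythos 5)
Your argument is correct and is essentially the standard proof of this result: the paper itself gives no proof but cites it from Yang--Yi, where the argument is exactly this partial-fraction decomposition $\frac{P(f)f'}{\prod_{j}(f-b_j)}=\sum_{j}A_j\frac{f'}{f-b_j}$ with constant $A_j=P(b_j)/\prod_{k\neq j}(b_j-b_k)$ followed by the lemma on the logarithmic derivative applied to each $f-b_j$. You are also right to flag that the hypothesis $q>p$ is what makes the rational function proper and that the $b_j$ must be pairwise distinct, a condition the paper's statement garbles but clearly intends.
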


\begin{lemma}\label{8}\cite{y1}
Let $f$ be a non-constant meromorphic function, and let $P(f)=a_{0}+a_{1}f+a_{2}f^{2}+\cdots+a_{n}f^{n}$, where $a_{i}$ are  small functions of $f$ for $i=0,1,\ldots,n$. Then
$$T(r,P(f))=nT(r,f)+S(r,f).$$
\end{lemma}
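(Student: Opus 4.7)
The plan is as follows. The statement is in fact an immediate consequence of the Valiron--Mohon'ko theorem (Lemma 2.6 of the paper) applied with $Q(f)\equiv 1$ and $\deg P = n$, which gives $T(r,P(f))=\max(n,0)\,T(r,f)+S(r,f)=n\,T(r,f)+S(r,f)$. For a self-contained argument I would split $T(r,P(f))$ into its proximity and counting pieces and bound each from above and from below.

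For the upper bound, the proximity term satisfies the pointwise estimate $|P(f(z))|\le (n+1)\,\max_{0\le i\le n}|a_i(z)|\cdot\max(1,|f(z)|)^n$, whence after taking $\log^+$ and integrating, $m(r,P(f))\le n\,m(r,f)+\sum_{i=0}^n m(r,a_i)+O(1)=n\,m(r,f)+S(r,f)$. For the counting term, every pole of $P(f)$ either comes from a pole of some coefficient $a_i$ (contributing $S(r,f)$) or sits at a pole of $f$ of order $k$ at which $P(f)$ has a pole of order at most $nk$; hence $N(r,P(f))\le n\,N(r,f)+S(r,f)$. Summing gives $T(r,P(f))\le nT(r,f)+S(r,f)$.

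For the lower bound, the counting side is immediate by the same local analysis: away from the exceptional set where some $a_i$ has a pole or $a_n$ has a zero, a pole of $f$ of order $k$ produces a pole of $P(f)$ of order \emph{exactly} $nk$, so $N(r,P(f))\ge n\,N(r,f)+S(r,f)$. For the proximity side I would fix a large constant $M$ and partition the circle of radius $r$ into $E_1=\{\theta:|f(re^{i\theta})|\ge M\}$ and its complement $E_2$. For $M$ chosen sufficiently large relative to the coefficients, on $E_1$ one has $|P(f)|\ge\tfrac{1}{2}|a_n||f|^n$, hence $\log^+|P(f)|\ge n\log^+|f|-\log^+|1/a_n|+O(1)$; on $E_2$ we trivially have $\log^+|f|\le\log M=O(1)$. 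Integrating and collecting terms yields $m(r,P(f))\ge n\,m(r,f)+S(r,f)$, which together with the pole estimate completes the lower bound.

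The main obstacle is the proximity half of the lower bound: one must carefully control the exceptional set where the asymptotic $P(f)\sim a_n f^n$ fails and absorb the contributions of $m(r,a_i)$, $N(r,a_i)$, and $N(r,1/a_n)$ into $S(r,f)$. This is exactly the technical content of the partition argument underlying Valiron--Mohon'ko, and it is the only place where anything beyond the most elementary Nevanlinna inequalities is invoked.
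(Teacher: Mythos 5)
The paper offers no proof of this lemma: it is quoted from \cite{y1} as a known result, and your opening observation---apply Lemma 2.6 with $Q\equiv 1$, so that $T(r,P(f))=\max\{n,0\}\,T(r,f)+S(r,f)=nT(r,f)+S(r,f)$---is exactly the intended justification and already constitutes a complete proof given that Lemma 2.6 is taken as known. Your self-contained sketch is the standard Valiron--Mohon'ko argument and is essentially sound: the upper bound is routine, and the pole half of the lower bound does close, since at an exceptional point where $a_n$ vanishes to order $s$ (or some $a_i$ has a pole) the pole of $P(f)$ drops below $nk$ by at most $ns$, so the total loss is absorbed into $nN(r,1/a_n)+n\sum_i N(r,a_i)=S(r,f)$. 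The one imprecision is in the proximity half of the lower bound: because the $a_i$ are functions rather than constants, no fixed constant $M$ guarantees $|P(f)|\ge\tfrac{1}{2}|a_n||f|^n$ on $\{|f|\ge M\}$; the cut must instead be made at $|f(re^{i\theta})|\ge 2(n+1)\max_i |a_i(re^{i\theta})/a_n(re^{i\theta})|$ (say), after which the complementary arc contributes $n\log^+\max_i|a_i/a_n|$ pointwise and hence $S(r,f)$ rather than $O(1)$ after integration. You flag this as the technical crux yourself, and with that repair the argument closes; there is no genuine gap.
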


\begin{lemma}\label{9} \cite{lh} Let $f$ and $g$ be two non-constant meromorphic functions. If $f$ and $g$ share $0,1,\infty$ IM, and $f$ is  a bilinear transformation of $g$,  then $f$ and $g$ assume one of the following six relations: (i) $fg=1$; (ii) $(f-1)(g-1)=1$; (iii) $f+g=1$; (iv) $f=cg$; (v) $f-1=c(g-1)$; (vi) $[(c-1)f+1][(c-1)g-c]=-c$, where $c\neq0,1$ is a complex number.
\end{lemma}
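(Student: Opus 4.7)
The plan is to write $f$ as an explicit Möbius transformation of $g$ and then exploit Picard's theorem. Specifically, I would set $f = \phi(g) = (\alpha g + \beta)/(\gamma g + \delta)$ with $\alpha\delta - \beta\gamma \neq 0$. The IM-sharing hypothesis for each $v \in \{0, 1, \infty\}$ translates cleanly: since
$$f^{-1}(v) = g^{-1}(\phi^{-1}(v))$$
must coincide with $g^{-1}(v)$ as subsets of $\mathbb{C}$, for each such $v$ either $\phi(v) = v$, or the two preimage sets under $g$ are disjoint and hence both empty, i.e., $g$ omits both $v$ and $\phi^{-1}(v)$.

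Let $F := \{v \in \{0, 1, \infty\} : \phi(v) = v\}$. The proof then proceeds by a case analysis on $|F|$, using the fact that, since $g$ is non-constant meromorphic on $\mathbb{C}$, Picard's theorem forces $g$ to omit at most two values in $\hat{\mathbb{C}}$. The case $|F| = 3$ gives $\phi = \mathrm{id}$ and $f \equiv g$ (the degenerate identity). The case $|F| = 0$ is excluded immediately because every value in $\{0,1,\infty\}$ then contributes a forced omission, overflowing the Picard budget. When $|F| = 2$, the Möbius $\phi$ fixes two of $\{0, 1, \infty\}$ and moves the third; the three choices of fixed pair give, after imposing the two fixed-point conditions in the general Möbius form and solving the resulting linear system for the coefficients, exactly relations (iv), (v), (vi). When $|F| = 1$, one has to show that $\phi$ must swap the two non-fixed special values, producing (i), (ii), (iii) according to which of $\{0, 1, \infty\}$ is fixed.

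The main obstacle I expect is the $|F| = 1$ case. There I would argue by contradiction: if $\phi$ did not interchange the two non-fixed special values $v_2, v_3$, then at least one of $\phi(v_2), \phi(v_3)$ lies outside $\{0, 1, \infty\}$, and the corresponding $\phi^{-1}(v_i)$ cannot coincide with $v_1$ (else it would be fixed), with $v_i$ (else $v_i$ would be fixed), nor with the other non-fixed value in the wrong way. A short bookkeeping argument then shows that the forced omission set $\{v, \phi^{-1}(v) : v \in \{0,1,\infty\} \setminus F\}$ contains at least three distinct points of $\hat{\mathbb{C}}$, contradicting Picard. Once this combinatorial classification is pinned down, each of the six surviving transformations is written explicitly from its fixed-point data (or its single fixed point plus the swap) and simplified to the stated normal form; for instance, the case fixing $0, \infty$ forces $\phi(g) = cg$, and the case fixing $1, \infty$ forces $\phi(g) - 1 = c(g - 1)$, while the case fixing $0, 1$ gives the bilinear identity (vi) after clearing denominators.
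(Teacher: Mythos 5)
Your argument is correct, and there is nothing in the paper to compare it against: the author states this lemma with a citation to Lahiri--Sarkar and gives no proof. Your route --- writing $f=\phi(g)$ for a M\"obius map $\phi$, observing that IM-sharing of $v\in\{0,1,\infty\}$ forces either $\phi(v)=v$ or the omission by $g$ of both $v$ and $\phi^{-1}(v)$, and then classifying by the number of fixed special values against Picard's two-value omission bound --- is the standard and essentially the shortest proof of this classification. The case count is right: $|F|=0$ is killed by Picard; $|F|=2$ yields the three one-parameter families (iv), (v), (vi) (for the pair $\{0,1\}$ the general non-identity M\"obius fixing both is $\phi(w)=w/(c-(c-1)w)$, which is exactly (vi) after clearing denominators); and in $|F|=1$ your bookkeeping does close up, since $\phi^{-1}(v_i)$ can equal neither the fixed value $v_1$ nor $v_i$ itself, so failure to swap puts a point outside $\{0,1,\infty\}$ into the omission set and overflows Picard, leaving only the three rigid involutions (i), (ii), (iii).

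Two small remarks. First, your $|F|=3$ case produces $f\equiv g$, which is not among the six listed relations; the lemma as transcribed in this paper silently drops that alternative (the source includes it, and relation (iv) with $c\neq 1$ cannot absorb it), so you were right to flag it separately rather than force it into the list. Second, when you treat $v=\infty$ you should say explicitly that $f^{-1}(\infty)=g^{-1}(\phi^{-1}(\infty))$ is read on the Riemann sphere (poles of $f$ occur exactly where $g$ takes the value $-\delta/\gamma$, or at poles of $g$ when $\gamma=0$); this is routine but worth one sentence. Neither point is a gap in the mathematics.
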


In the proof of Theorem 2, we will use the following Lemma proved by G. G. Gundersen \cite{g}.

\begin{lemma}\label{10}\cite{g}
 Let $f$, $F$ and $g$ be three non-constant meromorphic functions, where $g=F(f)$. Then $f$ and $g$ share three values IM if and only if there exist an entire function $h$ such that,  
 by a  suitable linear fractional transformation, one of the following cases holds: \\
 (i) $f\equiv g$;\\
 (ii) $f=e^{h}$ and $g=a(1+4ae^{-h}-4a^{2}e^{-2h})$ have three IM shared values $a\neq0$, $b=2a$ and $\infty$;\\
 (iii) $f=e^{h}$ and $g=\frac{1}{2}(e^{h}+a^{2}e^{-h})$ have three IM shared values $a\neq0$, $b=-a$ and $\infty$;\\
 (iv) $f=e^{h}$ and $g=a+b-abe^{-h}$ have three IM shared values $ab\neq0$ and $\infty$;\\  
 (v) $f=e^{h}$ and $g=\frac{1}{b}e^{2h}-2e^{h}+2b$ have three IM shared values $b\neq0$, $a=2b$ and $\infty$;\\
 (vi) $f=e^{h}$ and $g=b^{2}e^{-h}$ have three IM shared values $a\neq0$, $0$ and $\infty$.
 \end{lemma}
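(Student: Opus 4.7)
The easy direction is a direct verification: in each of cases (ii)--(vi) one checks by inspection that the prescribed pair $(f,g)$ shares the listed three values IM, and in case (i) the statement is trivial. The substantive content lies in the ``only if'' direction, and my plan for this is as follows.

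First, I would apply a linear fractional transformation on the target side to put the three shared IM values into a convenient normalised triple. Applying the Second Main Theorem to $f$ (and to $g$) at these three targets together with the IM sharing hypothesis yields $T(r,g)=T(r,f)+S(r,f)$ and shows that the three shared values are essentially free of deficiency for both $f$ and $g$. The functional identity $g=F(f)$ combined with $f(z_0)=a_j\Rightarrow g(z_0)=a_j$ further forces $F(a_j)=a_j$ for each shared value $a_j$; and every remaining preimage in $F^{-1}(a_j)\setminus\{a_j\}$ must be omitted by $f$ outside an $S(r,f)$-exceptional set.

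Next, I would combine the Valiron-type equality $T(r,F(f))=(\deg F)\,T(r,f)+S(r,f)$ from Lemma \ref{6} with $T(r,g)=T(r,f)+S(r,f)$. This forces $\deg F\in\{1,2\}$, and when $\deg F=2$ it forces $f$ to omit two values in the Picard sense, whence $f=e^{h}$ with $h$ entire after an additional linear fractional normalisation. Case (i) then corresponds to $\deg F=1$: the Möbius $F$ fixes three points and must be the identity. The remaining cases correspond to $\deg F=2$: one enumerates the degree-two rational maps $F$ fixing the three prescribed shared values and having non-fixed preimages inside the omitted set $\{0,\infty\}$ of $e^{h}$, substitutes $f=e^{h}$, and matches $g=F(e^{h})$ against each normalised shared-value triple to produce precisely the five formulas (ii)--(vi).

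The main obstacle will be carrying out this degree-two case analysis carefully: I must verify that, up to the symmetries $h\mapsto -h+\text{const}$ and linear fractional equivalence, the only admissible triples of shared values and degree-two rational $F$ are the five listed non-trivial families, with no configuration overlooked. This reduces to solving the algebraic systems $F(a_j)=a_j$ together with the preimage constraints $F^{-1}(a_j)\setminus\{a_j\}\subset\{0,\infty\}$, which is elementary but requires patience and careful case-by-case book-keeping to rule out spurious configurations.
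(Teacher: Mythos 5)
The paper does not prove this lemma: it is quoted from Gundersen \cite{g} and used as a black box, so there is no internal proof to measure your attempt against, and I can only judge the proposal on its own terms. Your outline has the right skeleton (fixed points of $F$ at attained shared values, omitted extra preimages, a degree bound on $F$, then enumeration), but its pivotal analytic step is false. You assert that the Second Main Theorem together with IM sharing of three values gives $T(r,g)=T(r,f)+S(r,f)$. The cases you are trying to derive refute this: in (ii), (iii) and (v) the function $g$ is a degree-two rational function of $f=e^{h}$, so Lemma \ref{6} gives $T(r,g)=2T(r,f)+O(1)$. Worse, your next step is inconsistent with your own claim: combining $T(r,g)=T(r,f)+S(r,f)$ with $T(r,g)=(\deg F)\,T(r,f)+S(r,f)$ would force $\deg F=1$ and would eliminate cases (ii), (iii), (v) altogether, yet you conclude $\deg F\in\{1,2\}$. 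What the Second Main Theorem actually yields, applied to $g$ at the three shared values, is $(\deg F)\,T(r,f)\le\sum_{j}\overline{N}(r,\frac{1}{f-a_{j}})+S(r,f)\le 3T(r,f)+S(r,f)$, i.e.\ only $\deg F\le 3$; pushing this down to $\deg F\le 2$ is precisely where the hard work lies (one must combine Riemann--Hurwitz for $F$ with the fact that $f$ can omit at most two values), and your sketch contains no argument for it.

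Two further points. First, your deduction $F(a_{j})=a_{j}$ is valid only for shared values that $f$ actually attains; in every non-trivial case of the lemma the shared value $\infty$ is omitted by both functions (the sharing there is vacuous) and is \emph{not} fixed by $F$ --- in (ii), for instance, $F(w)=a(w^{2}+4aw-4a^{2})/w^{2}$ has $F(\infty)=a$. Your enumeration must therefore distinguish attained from omitted shared values, which changes the algebraic systems you propose to solve. Second, the lemma allows $F$ to be an arbitrary non-constant meromorphic function, so before invoking Lemma \ref{6} you must dispose of transcendental $F$ (for transcendental $f$ the growth of the composition $F(f)$ is incompatible with the bound $T(r,g)\le 3T(r,f)+S(r,f)$ above). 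On the positive side, the omission of the extra preimages is cleaner than you state: if $F(w)=a_{j}$ with $w\ne a_{j}$ and $f(z_{0})=w$, then $g(z_{0})=a_{j}$ while $f(z_{0})\ne a_{j}$, violating IM sharing outright, so these values are genuinely Picard-omitted rather than omitted only up to an $S(r,f)$ set; this is what legitimately produces $f=e^{h}$.
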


\begin{lemma}\label{11} \cite{hk3,y1,y2}  Let $f$ and $g$ be two non-constant meromorphic functions, and let $\rho(f)$ and  $\rho(g)$ be the order of $f$ and $g$, respectively. Then
$\rho(fg)\leq \max\{\rho(f), \rho(g)\}$.
\end{lemma}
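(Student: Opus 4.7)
The plan is to bound the Nevanlinna characteristic $T(r,fg)$ in terms of $T(r,f)$ and $T(r,g)$, and then take logarithms twice to recover the order inequality.

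First I would invoke the elementary consequence of the first main theorem, namely
\[ T(r, fg) \le T(r, f) + T(r, g) + O(1), \]
which follows at once from the pointwise inequality $\log^{+}|fg|\le \log^{+}|f|+\log^{+}|g|$ for the proximity function and from the fact that any pole of $fg$ is a pole of $f$ or of $g$, with multiplicities adding, for the counting function.

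Next I would apply the elementary sub-logarithm inequality $\log^{+}(a+b)\le \max\{\log^{+}a,\log^{+}b\}+\log 2$, valid for all $a,b\ge 0$, to upgrade the above bound to
\[ \log^{+} T(r, fg) \le \max\{\log^{+} T(r,f),\log^{+} T(r,g)\} + O(1). \]
Dividing by $\log r$ and passing to the limit (superior or inferior, according to the convention fixed in the preliminaries) as $r\to\infty$, the additive $O(1)$ disappears and the maximum commutes with the monotone limiting operation, yielding $\rho(fg)\le \max\{\rho(f),\rho(g)\}$.

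There is essentially no substantive obstacle here: this is a routine exercise in the sub-additivity of the Nevanlinna characteristic combined with elementary monotonicity properties of $\log^{+}$ and $\max$. The only point meriting a second glance is the distinction between the $\limsup$ and $\liminf$ versions of the order, but the argument is insensitive to the choice, because both limit operations are monotone and the maximum of a bound is a bound on the maximum. A completely analogous argument (using $T(r,f/g)\le T(r,f)+T(r,g)+O(1)$) would also give $\rho(f/g)\le\max\{\rho(f),\rho(g)\}$ if that variant were ever needed later in the paper.
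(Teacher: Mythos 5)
Your argument is correct for the usual definition of order via $\varlimsup$, and it is exactly the standard one: the paper gives no proof of this lemma at all, citing it from Hayman and Yang--Yi, where the subadditivity $T(r,fg)\le T(r,f)+T(r,g)+O(1)$ followed by taking $\log^{+}$ and dividing by $\log r$ is precisely the argument used. So in substance there is nothing to compare against, and your write-up would serve as the omitted proof.

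One parenthetical claim in your proposal is, however, false and worth correcting: the argument is \emph{not} insensitive to the choice between $\varlimsup$ and $\varliminf$. From $c_r\le\max\{a_r,b_r\}+o(1)$ one may conclude $\varlimsup c_r\le\max\{\varlimsup a_r,\varlimsup b_r\}$ because $\varlimsup$ commutes with $\max$; but in general $\varliminf\max\{a_r,b_r\}$ can strictly exceed $\max\{\varliminf a_r,\varliminf b_r\}$ (take $a_r$ and $b_r$ oscillating out of phase between $0$ and $1$, so that $\max\{a_r,b_r\}\equiv 1$ while both lower limits vanish), so the last step of your proof breaks down for lower orders. This matters here only because the paper's displayed definition of $\rho(f)$ is written with $\varliminf$ --- almost certainly a typo for $\varlimsup$, by analogy with the definition of $\rho_2(f)$ --- so you should state explicitly that your proof establishes the inequality for the $\varlimsup$-order, which is what the cited sources assert and what is actually used in the proof of Theorem 2.
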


\begin{lemma}\label{12}\cite{y3} Let $f(z)$  be a non-constant meromorphic function, and let $a_{1}, a_{2}, a_{3}\in \hat{S}(f)$ be three distinct small functions of $f$. Then
$$T(r,f)\leq \sum_{j=1}^{3}\overline{N}(r,\frac{1}{f-a_{j}})+S(r,f).$$
\end{lemma}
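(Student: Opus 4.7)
The plan is to reduce this three-small-function statement to the classical truncated Nevanlinna second main theorem for three \emph{constant} values, by means of a M\"obius transformation of $f$ whose coefficients are themselves small functions of $f$. The key observation is that Mokhon'ko's theorem (Lemma~\ref{6}) guarantees that such transformations preserve the characteristic up to $S(r,f)$.

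First I would dispose of the case in which one of the $a_j$ equals $\infty$ by a preliminary linear substitution: if, say, $a_3\equiv\infty$, take $g=(f-a_1)/(a_2-a_1)$ and proceed as below. Otherwise all three $a_j\in S(f)$ are finite, and I would define
$$g(z)=\frac{(f(z)-a_1(z))(a_2(z)-a_3(z))}{(f(z)-a_3(z))(a_2(z)-a_1(z))}.$$
Viewed as a rational expression in $f$, the numerator and denominator are linear polynomials in $f$ with small-function coefficients that are mutually prime (otherwise one forces $a_1\equiv a_3$, contradicting distinctness). Lemma~\ref{6} therefore yields $T(r,g)=T(r,f)+S(r,f)$, so $S(r,g)=S(r,f)$ and the families of small functions of $f$ and $g$ coincide.

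Second, I would record the dictionary of level sets. A direct computation shows that, away from the zero divisor of $(a_1-a_2)(a_2-a_3)(a_1-a_3)$, one has $g=0\Leftrightarrow f=a_1$, $g=1\Leftrightarrow f=a_2$, and $g=\infty\Leftrightarrow f=a_3$. Since each $a_i-a_j$ is a nonzero small function of $f$, this exceptional locus contributes only $S(r,f)$ to any counting function, whence
\begin{align*}
\overline{N}(r,1/g)&=\overline{N}(r,1/(f-a_1))+S(r,f),\\
\overline{N}(r,1/(g-1))&=\overline{N}(r,1/(f-a_2))+S(r,f),\\
\overline{N}(r,g)&=\overline{N}(r,1/(f-a_3))+S(r,f).
\end{align*}

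Finally I would apply the classical truncated Nevanlinna second main theorem to $g$ at the three constant values $0,1,\infty$, which gives
$$T(r,g)\le \overline{N}(r,g)+\overline{N}(r,1/g)+\overline{N}(r,1/(g-1))+S(r,g),$$
and substituting the three identities above together with $T(r,g)=T(r,f)+S(r,f)$ produces the desired estimate. There is no substantive obstacle in this case; the main work is just bookkeeping the $S(r,f)$ contributions from the zeros and poles of the differences $a_i-a_j$. (The genuinely deep small-function phenomenon due to Yamanoi only appears for $q\ge 4$ distinct small targets; the case $q=3$ already reduces cleanly to the classical theory via a M\"obius change of coordinates.)
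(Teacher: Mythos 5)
The paper does not prove this lemma at all; it simply cites Yamanoi's second main theorem for small functions \cite{y3}, so there is no internal proof to compare against. Your argument is correct and, for the case of exactly three targets, strictly more elementary than what the citation invokes. The cross-ratio $g=\frac{(f-a_1)(a_2-a_3)}{(f-a_3)(a_2-a_1)}$ does satisfy $g-1=\frac{(f-a_2)(a_1-a_3)}{(f-a_3)(a_2-a_1)}$, so the level-set dictionary you state is exact; the numerator and denominator are coprime linear polynomials in $f$ over $S(f)$, so Lemma 2.6 gives $T(r,g)=T(r,f)+S(r,f)$ and hence $S(r,g)=S(r,f)$; and each $\overline{N}$-transfer holds up to $S(r,f)$ because the only exceptional points are zeros and poles of the differences $a_i-a_j$ and poles of the $a_j$ themselves (note in particular that a pole of $f$ outside this exceptional set is neither a zero nor a pole of $g$, since $g$ tends to $\frac{a_2-a_3}{a_2-a_1}$ there, which is harmless as $\overline{N}(r,f)$ does not appear in the target inequality when all three $a_j$ are finite). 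Your preliminary normalization when one target is $\infty$ is also fine, and since the truncated second main theorem $(q-2)T(r,g)\leq\sum_{j}\overline{N}(r,\frac{1}{g-b_j})+S(r,g)$ for constant targets is what you apply at $q=3$, the truncation level $1$ survives the transfer. This is essentially the classical three-small-functions argument (it appears with this proof in Yang--Yi's book), and your closing remark is accurate: Yamanoi's theorem is only genuinely needed for $q\geq4$ small targets, so your proof buys a self-contained justification of Lemma 2.12 that does not rest on that deep result. The only cosmetic gap is that you should enlarge the exceptional divisor to include the poles of the $a_j$ (not just the zeros of the pairwise differences), but you acknowledge this bookkeeping explicitly and it contributes only $S(r,f)$.
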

\

{\bf Remark 3} We can see from the proof that Lemma 2.9 \cite{lh} and Lemma 2,10 \cite{y1} are still true when $f$ and $g$ share three value IM almost.

\begin{lemma}\label{13}  Let $f$ be a transcendental meromorphic function, let $k_{j}(j=1,2,\ldots,q)$ be  distinct constants, and let $a_{1}\not\equiv\infty$ and $a_{2}\not\equiv\infty$ be two distinct small functions of $f$ . Again let $d_{j}=a-k_{j}(a-b)$ $(j=1,2,\ldots,q)$. Then
$$m(r,\frac{L(f)}{f-a_{1}})=S(r,f), \quad m(r,\frac{L(f)}{f-d_{j}})=S(r,f).$$
for $1\leq i\leq q$ and 
$$m(r,\frac{L(f)f}{(f-d_{1})(f-d_{2})\cdots(f-d_{m})})=S(r,f),$$
where $L(f)=(a'_{1}-a'_{2})(f-a_{1})-(a_{1}-a_{2})(f'-a'_{1})$, and $2\leq m\leq q$.
\end{lemma}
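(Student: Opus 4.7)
The key algebraic observation is that $L(f)$ admits an invariant representation: for every $j$, one has
\[
L(f) \;=\; (a_1'-a_2')(f-d_j) - (a_1-a_2)(f-d_j)'.
\]
Indeed, subtracting the right-hand side from the definition gives
\[
(a_1'-a_2')(a_1-d_j) - (a_1-a_2)(a_1-d_j)' \;=\; k_j(a_1'-a_2')(a_1-a_2) - k_j(a_1-a_2)(a_1'-a_2') \;=\; 0,
\]
using that $k_j$ is constant so $(a_1-d_j)' = k_j(a_1-a_2)'$. The same identity with $d_0 := a_1$ (corresponding to $k_0=0$) is exactly the definition of $L(f)$.

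With this representation in hand, I divide by $f-d_j$ to obtain
\[
\frac{L(f)}{f-d_j} \;=\; (a_1'-a_2') - (a_1-a_2)\,\frac{(f-d_j)'}{f-d_j}.
\]
Since $a_1,a_2$ are small functions of $f$, so are $a_1-a_2$ and $a_1'-a_2'$, and by the logarithmic derivative lemma applied to $f-d_j$ we have $m\!\left(r,(f-d_j)'/(f-d_j)\right)=S(r,f)$. This yields both $m(r,L(f)/(f-a_1))=S(r,f)$ and $m(r,L(f)/(f-d_j))=S(r,f)$ for $1\le j\le q$.

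For the last estimate, since $2\le m\le q$ the rational function $x/\prod_{j=1}^m(x-d_j)$ is a proper fraction in $x$, so its partial fraction decomposition over the field of small functions reads
\[
\frac{f}{\prod_{j=1}^{m}(f-d_j)} \;=\; \sum_{j=1}^{m} \frac{A_j}{f-d_j}, \qquad A_j \;=\; \frac{d_j}{\prod_{i\neq j}(d_j-d_i)},
\]
where each $A_j$ is a small function (the $d_j$ are distinct since the $k_j$ are distinct and $a_1\not\equiv a_2$). Multiplying by $L(f)$ and applying the proximity estimate term by term gives
\[
m\!\left(r,\frac{L(f)f}{(f-d_1)\cdots(f-d_m)}\right) \;\le\; \sum_{j=1}^m m\!\left(r,\frac{L(f)}{f-d_j}\right) + S(r,f) \;=\; S(r,f).
\]

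The only point that requires any care is the invariance identity $L(f)=(a_1'-a_2')(f-d_j)-(a_1-a_2)(f-d_j)'$; once that is noted, the rest reduces to the logarithmic derivative lemma and partial fractions. I do not anticipate any serious obstacle.
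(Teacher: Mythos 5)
Your proposal is correct and follows essentially the same route as the paper: bound $m\bigl(r,L(f)/(f-d_j)\bigr)$ by splitting off small-function coefficients and a logarithmic derivative, then use the partial-fraction decomposition $\frac{f}{\prod(f-d_j)}=\sum A_j/(f-d_j)$ with small coefficients. Your explicit invariance identity $L(f)=(a_1'-a_2')(f-d_j)-(a_1-a_2)(f-d_j)'$ is a clean way of making precise the step the paper only carries out for $d_0=a_1$ and then asserts for the other $d_j$; the only item in the paper's proof you omit is the observation $L(f)\not\equiv0$, which is not needed for the stated proximity estimates (it matters only later when $L(f)$ appears in a denominator).
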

\begin{proof}
We first claim that $L(f)\not\equiv0$. Otherwise, if $L(f)\equiv0$,  then we can get $\frac{f'-a'_{1}}{f-a_{1}}\equiv\frac{a'_{1}-a'_{2}}{a_{1}-a_{2}}$. Integrating both side of above we can obtain $f-a_{1}=C_{1}(a_{1}-a_{2})$, where $C_{1}$ is a nonzero constant. So by Lemma 2.3, we have $T(r,f)=T(r,f_{c})+S(r,f)=T(r,C(a_{1}-a_{2})+a_{1})=S(r,f)$, a contradiction. Hence $L(f)\not\equiv0$. Obviously, we have
$$m(r,\frac{L(f)}{f-a_{1}})\leq m(r,\frac{(a'-b')(f-a_{1})}{f-a_{1}})+m(r,\frac{(a-b)(f'-a'_{1})}{f-a_{1}})=S(r,f),$$
and
$$\frac{L(f)f}{(f-d_{1})(f-d_{2})\cdots(f-d_{q})}=\sum_{i=1}^{q}\frac{C_{i}L(f)}{f-d_{i}},$$
where $C_{i}=\frac{d_{j}}{\prod\limits_{j\neq i}(d_{i}-d_{j})}$ are small functions of $f$. By Lemma 2.1 and above, we have
\begin{align}
&m(r,\frac{L(f)f}{(f-d_{1})(f-d_{2})\cdots(f-d_{q})})=m(r,\sum_{i=1}^{q}\frac{C_{i}L(f)}{f-d_{i}})\notag\\
&\leq\sum_{i=1}^{q}m(r,\frac{L(f_{c})}{f_{c}-d_{i}})+S(r,f)=S(r,f).
\end{align}
\end{proof}

\section{The proof of Theorem 1}
If $f\equiv g$, there is nothing to prove. Suppose $f\not\equiv g$. Since $f$ is a non-constant meromorphic function,  $f$ and $g$ share $a_{1},\infty$ CM, then  we get
\begin{align}
\frac{g-a_{1}}{f-a_{1}}=q,
\end{align}
where $q$ is a meromorphic function, and (2.2) implies  $h=\frac{1}{q}$.\\

Since $f$ and $g$ share $a_{1},\infty$ CM and share $a_{2}$ IM, then  Lemma 2.1, Lemma 2.2 and Lemma 2.12 we have
\begin{eqnarray*}
\begin{aligned}
T(r,f)&\leq\overline{N}(r,f)+ \overline{N}(r,\frac{1}{f-a_{1}})+\overline{N}(r,\frac{1}{f-a_{2}})+S(r,f)= \overline{N}(r,\frac{1}{g-a_{1}})\\
&+\overline{N}(r,\frac{1}{g-b})+S(r,f)\leq N(r,\frac{1}{f-g})+S(r,f)\\
&\leq T(r,f-g)+S(r,f)\leq m(r,f-g)+S(r,f)\\
&\leq m(r,f-\sum_{i=0}^{n}b_{i}f^{(k_{i})}_{c_{i}})+S(r,f)\\
&\leq m(r,f)+m(r,1-\frac{\sum_{i=0}^{n}b_{i}f^{(k_{i})}_{c_{i}}}{f})+S(r,f)\\
&\leq T(r,f)+S(r,f).
\end{aligned}
\end{eqnarray*}

That is
\begin{eqnarray}
T(r,f)=\overline{N}(r,\frac{1}{f-a_{1}})+\overline{N}(r,\frac{1}{f-a_{2}})+S(r,f).
\end{eqnarray}
According to (3.1) and (3.2) we have
\begin{eqnarray}
T(r,f)=T(r,f-g)+S(r,f)=N(r,\frac{1}{f-g})+S(r,f).
\end{eqnarray}
and
\begin{align}
T(r,q)&=m(r,q)+S(r,f)\notag\\
&=m(r,\frac{g-a_{1}}{f-a_{1}})+S(r,f)\notag\\
&\leq m(r,\frac{1}{f-a_{1}})+S(r,f).
\end{align}
Then it follows from (3.1) and (3.3) that
\begin{align}
m(r,\frac{1}{f-a_{1}})&=m(r,\frac{q-1}{f-g})\notag\\
&\leq m(r,\frac{1}{f-g})+m(r,q-1)\notag\\
&\leq T(r,q)+S(r,f).
\end{align}
Then by (3.4) and (3.5)
\begin{align}
T(r,q)= m(r,\frac{1}{f-a_{1}})+S(r,f).
\end{align}
On the other hand, (3.1) can be rewritten as
\begin{align}
\frac{g-f}{f-a_{1}}=q-1,
\end{align}
which implies
\begin{align}
\overline{N}(r,\frac{1}{f-a_{2}})\leq \overline{N}(r,\frac{1}{q-1})=T(r,q)+S(r,f).
\end{align}
Thus, by (3.2), (3.6) and (3.8)
\begin{eqnarray*}
\begin{aligned}
m(r,\frac{1}{f-a_{1}})+N(r,\frac{1}{f-a_{1}})&= \overline{N}(r,\frac{1}{f-a_{1}})+\overline{N}(r,\frac{1}{f-a_{2}})+S(r,f)\\
&\leq \overline{N}(r,\frac{1}{f-a_{1}})+\overline{N}(r,\frac{1}{q-1})+S(r,f)\\
&\leq\overline{N}(r,\frac{1}{f-a_{1}})+m(r,\frac{1}{f-a_{1}})+S(r,f),
\end{aligned}
\end{eqnarray*}
that is
\begin{align}
N(r,\frac{1}{f-a_{1}})=\overline{N}(r,\frac{1}{f-a_{1}})+S(r,f).
\end{align}
And then
\begin{align}
\overline{N}(r,\frac{1}{f-a_{2}})=T(r,q)+S(r,f).
\end{align}
Set
\begin{eqnarray}
\varphi=\frac{f'(f-g)}{(f-a_{1})(f-a_{2})},
\end{eqnarray}
and
\begin{eqnarray}
\psi=\frac{g'(f-g)}{(g-a_{1})(g-a_{2})}.
\end{eqnarray}
 Easy to know that $\varphi\not\equiv0$ because of $f\not\equiv g $, and $N(r,\varphi)=S(r,f)$. By   Lemma 2.1 and Lemma 2.7 we have
\begin{eqnarray*}
\begin{aligned}
&T(r,\varphi)=m(r,\varphi)=m(r,\frac{f'(f-g)}{(f-a_{1})(f-a_{2})})+S(r,f)\notag\\
&=m(r,\frac{ff'}{(f-a_{1})(f-a_{2})}\frac{f-\sum_{i=0}^{n}b_{i}f^{(k_{i})}_{c_{i}}}{f})+m(r,\frac{b_{-1}ff'}{(f-a_{1})(f-a_{2})})+S(r,f)\\
&\leq m(r,\frac{ff'}{(f-a_{1})(f-a_{2})})+m(r,1-\frac{\sum_{i=0}^{n}b_{i}f^{(k_{i})})_{c_{i}}}{f})+S(r,f)=S(r,f),
\end{aligned}
\end{eqnarray*}
that is
\begin{align}
T(r,\varphi)=S(r,f).
\end{align}
Let $d=a_{1}-j(a_{1}-b_{1})(j\neq0,1)$. Obviously, by  Lemma 2.1, Lemma 2.12 and  the First Fundamental Theorem of Nevanlinna, we obtain
\begin{eqnarray*}
\begin{aligned}
 2T(r,f)&\leq \overline{N}(r,f)+\overline{N}(r,\frac{1}{f-a_{1}})+ \overline{N}(r,\frac{1}{f-a_{2}})+\overline{N}(r,\frac{1}{f}) +S(r,f)\\
 &\leq T(r,f)+T(r,f)-m(r,\frac{1}{f})+S(r,f),
\end{aligned}
\end{eqnarray*}
which implies
\begin{align}
m(r,\frac{1}{f})=S(r,f).
\end{align}
And by (3.14)
\begin{align}
m(r,\frac{1}{f-d})&=m(r,\frac{f'(f-g)}{\varphi (f-a_{1})(f-a_{2})(f-d)})\leq m(r,1-\frac{g}{f})\notag\\
&+m(r,\frac{ff'}{(f-a_{1})(f-a_{2})(f-d)})+S(r,f)=S(r,f).
\end{align}
Set
\begin{align}
\phi=\frac{g'}{(g-a_{1})(g-a_{2})}-\frac{f'}{(f-a_{1})(f-a_{2})}.
\end{align}
We  discuss  two cases.\\

{\bf Case 1}\quad $\phi\equiv0$.  Integrating the both side of (3.16) which leads to
\begin{align}
\frac{f-a_{2}}{f-a_{1}}=C\frac{g-a_{2}}{g-a_{1}},
\end{align}
where $C$ is a nonzero constant.
Then by Lemma 2.4 we get
\begin{eqnarray}
2T(r,f)\leq\overline{N}(r,\frac{1}{f-a_{1}})+\overline{N}(r,\frac{1}{f-a_{2}})+S(r,f),
\end{eqnarray}
which contradicts with (3.2).

{\bf Case 2} \quad $\phi \not\equiv0$. By (3.3), (3.13) and (3.16), we can obtain
\begin{align}
m(r,f)&=m(r,f-g)+S(r,f)\notag\\
&=m(r,\frac{\phi(f-g)}{\phi})+S(r,f)=m(r,\frac{\psi-\varphi}{\phi})+S(r,f)\notag\\
&\leq T(r,\frac{\phi}{\psi-\varphi})+S(r,f)\leq T(r,\psi-\varphi)+T(r,\phi)+S(r,f)\notag\\
&\leq T(r,\psi)+T(r,\phi)+S(r,f)\notag\\
&\leq T(r,\psi)+\overline{N}(r,\frac{1}{f-a_{2}})+S(r,f),
\end{align}
on the other hand,
\begin{align}
T(r,\psi)&=T(r,\frac{g'(f-g)}{(g-a_{1})(g-a_{2})})\notag\\
&=m(r,\frac{g'(f-g)}{(g-a_{1})(g-a_{2})})+S(r,f)\notag\\
&\leq m(r,\frac{g'}{g-a_{2}})+m(r,\frac{f-g}{g-a_{1}})\notag\\
&\leq m(r,\frac{1}{f-a_{1}})+S(r,f)=\overline{N}(r,\frac{1}{f-a_{2}})+S(r,f).
\end{align}
Hence combining  (3.19) and (3.20), we obtain
\begin{align}
 T(r,f)\leq 2\overline{N}(r,\frac{1}{f-a_{2}})+S(r,f).
\end{align}

Next, Case 2 is  divided into two subcases.

{\bf Subcase 2.1}\quad $a_{1}=G$, where $G$ is defined as (2.2) in Lemma 2.5. Then by (3.1) and Lemma 2.1 we can get
\begin{align}
 m(r,q)=m(r,\frac{g-G}{f-a_{1}})=S(r,f).
\end{align}
Then by (3.10), (3.21)  and (3.22) we can have $T(r,f)=S(r,f)$, a contradiction.\\

{\bf Subcase 2.2} \quad $a_{2}=G$. Then by (3.6), (3.10) and (3.21), we get
\begin{align}
 T(r,f)&\leq m(r,\frac{1}{f-a_{1}})+\overline{N}(r,\frac{1}{g-G})+S(r,f)\notag\\
 &\leq m(r,\frac{1}{g-G})+\overline{N}(r,\frac{1}{g-G})+S(r,f)\notag\\
 &\leq T(r,g)+S(r,f).
\end{align}
From the fact that
\begin{align}
 T(r,g)\leq T(r,f)+S(r,f),
\end{align}
which follows from (3.23) that
\begin{align}
 T(r,f)=T(r,g)+S(r,f).
\end{align}
By Lemma 2.2, (3.2) and (3.25), we have
\begin{eqnarray*}
\begin{aligned}
2T(r,f)&\leq 2T(r,g)+S(r,f)\\
&\leq\overline{N}(r,g)+\overline{N}(r,\frac{1}{g-a_{1}})+\overline{N}(r,\frac{1}{g-G})+\overline{N}(r,\frac{1}{g-d})+S(r,f)\\
&\leq \overline{N}(r,\frac{1}{f-a_{1}})+\overline{N}(r,\frac{1}{f-a_{2}})+T(r,\frac{1}{g-d})-m(r,\frac{1}{g-d})+S(r,f)\\
&\leq T(r,f)+T(r,g)-m(r,\frac{1}{g-d})+S(r,f)\\
&\leq 2T(r,f)-m(r,\frac{1}{g-d})+S(r,f).
\end{aligned}
\end{eqnarray*}
Thus
\begin{eqnarray}
m(r,\frac{1}{g-d})=S(r,f).
\end{eqnarray}

From the First Fundamental Theorem,  Lemma 2.1, (3.14)-(3.15), (3.25)-(3.26) and  $f$ is a non-constant  meromorphic function, we obtain
\begin{eqnarray*}
\begin{aligned}
m(r,\frac{f-d}{g-d})&\leq m(r,\frac{f}{g-d})+m(r,\frac{d}{g-d})+S(r,f)\\
&\leq T(r,\frac{f}{g-d})-N(r,\frac{f}{g-d})+S(r,f)\\
&=m(r,\frac{g-d}{f})+N(r,\frac{g-d}{f})-N(r,\frac{f}{g-d})\\
&+S(r,f)\leq N(r,\frac{1}{f})-N(r,\frac{1}{g-d})+S(r,f)\\
&=T(r,\frac{1}{f})-T(r,\frac{1}{g-d})+S(r,f)\\
&=T(r,f)-T(r,g)+S(r,f)=S(r,f).
\end{aligned}
\end{eqnarray*}

Thus we get
\begin{eqnarray}
m(r,\frac{f-d}{g-d})=S(r,f).
\end{eqnarray}
It's easy to see that $N(r,\psi)=S(r,f)$  and (3.12) can be rewritten as
\begin{eqnarray}
\psi=[\frac{a_{1}-d}{a_{1}-a_{2}}\frac{g'}{g-a_{1}}-\frac{a_{2}-d}{a_{1}-a_{2}}\frac{g'}{g-a_{2}}][\frac{f-d}{g-d}-1].
\end{eqnarray}
Then by  (3.27) and (3.28) we can get
\begin{eqnarray}
T(r,\psi)=m(r,\psi)+N(r,\psi)=S(r,f).
\end{eqnarray}
By (3.2), (3.19), and (3.29) we get
\begin{eqnarray}
\overline{N}(r,\frac{1}{f-a_{1}})=S(r,f).
\end{eqnarray}
Moreover, by (3.2), (3.25) and (3.30), we have
\begin{eqnarray}
m(r,\frac{1}{g-G})=S(r,f),
\end{eqnarray}
which implies
\begin{eqnarray}
\overline{N}(r,\frac{1}{f-a_{2}})=m(r,\frac{1}{f-a_{2}})\leq m(r,\frac{1}{g-G})=S(r,f).
\end{eqnarray}
Then by (3.2) we obtain $T(r,f)=S(r,f)$, a contradiction.\\

{\bf Subcase 2.3} $a_{1}\not\equiv G, a_{2}\not\equiv G$. So by (3.6), (3.10), (3.21) and Lemma 2.1, we can get
\begin{eqnarray*}
\begin{aligned}
T(r,f)&\leq 2m(r,\frac{1}{f-a_{1}})+S(r,f)\leq2m(r,\frac{1}{g-G})\\
&+S(r,f)=2T(r,g)-2N(r,\frac{1}{g-G})+S(r,f)\\
&\leq\overline{N}(r,g)+\overline{N}(r,\frac{1}{g-a_{1}})+\overline{N}(r,\frac{1}{g-a_{2}})+\overline{N}(r,\frac{1}{g-G})\\
&-2N(r,\frac{1}{g-G})+S(r,f)\leq T(r,f)-N(r,\frac{1}{g-G})+S(r,f),
\end{aligned}
\end{eqnarray*}
which deduces that
\begin{align}
N(r,\frac{1}{g-G})=S(r,f).
\end{align}
It follows from (3.33) and Lemma 2.12 that
\begin{eqnarray*}
\begin{aligned}
T(r,g)&\leq \overline{N}(r,g)+\overline{N}(r,\frac{1}{g-G})+\overline{N}(r,\frac{1}{g-a_{1}})+S(r,f)\\
&\leq \overline{N}(r,\frac{1}{g-a_{1}})+S(r,f)\leq T(r,g)+S(r,f),
\end{aligned}
\end{eqnarray*}
which implies that
\begin{align}
T(r,g)=\overline{N}(r,\frac{1}{g-a_{1}})+S(r,f).
\end{align}
Similarly
\begin{align}
T(r,g)=\overline{N}(r,\frac{1}{g-a_{2}})+S(r,f).
\end{align}
Then by (3.21) we get
\begin{align}
T(r,f)=2T(r,g)+S(r,f).
\end{align}
Easy to see from (3.16) that
\begin{align}
T(r,\phi)=N(r,\phi)+S(r,f)\leq\overline{N}(r,\frac{1}{g-a_{2}})+S(r,f).
\end{align}
We claim that
\begin{align}
T(r,\phi)=\overline{N}(r,\frac{1}{g-a_{2}})+S(r,f).
\end{align}
Otherwise, 
\begin{align}
T(r,\phi)<\overline{N}(r,\frac{1}{g-a_{2}})+S(r,f).
\end{align}
We can  deduce from (3.2), (3.12), Lemma 2.1 and Lemma 2.3  that
\begin{eqnarray*}
\begin{aligned}
T(r,\psi)&=T(r,\frac{g'(f-g)}{(g-a_{1})(g-a_{2})})=m(r,\frac{g'(f-g)}{(g-a_{1})(g-a_{2})})+S(r,f)\notag\\
&\leq m(r,\frac{g'}{g-a_{1}})+m(r,\frac{f-a_{2}}{g-a_{2}}-1)\notag\\
&\leq m(r,\frac{g-a_{2}}{f-a_{2}})+N(r,\frac{g-a_{2}}{f-a_{2}})-N(r,\frac{f-a_{2}}{g-a_{2}})+S(r,f)\\
&\leq m(r,\frac{1}{f-a_{2}})+N(r,\frac{1}{f-a_{2}})-N(r,\frac{1}{g-a_{2}})+S(r,f)\\
&\leq T(r,f)-\overline{N}(r,\frac{1}{g-a_{2}})+S(r,f)\leq \overline{N}(r,\frac{1}{f-a_{1}})+S(r,f),
\end{aligned}
\end{eqnarray*}
which is
\begin{align}
T(r,\psi)\leq \overline{N}(r,\frac{1}{f-a_{1}})+S(r,f).
\end{align}
Then combining (3.2), (3.39)-(3.40) and the proof of (3.19), we obtain
\begin{eqnarray*}
\begin{aligned}
&\overline{N}(r,\frac{1}{f-a_{1}})+\overline{N}(r,\frac{1}{f-a_{2}})=T(r,f)+S(r,f)\notag\\
&\leq \overline{N}(r,\frac{1}{f-a_{1}})+T(r,\phi)+S(r,f),
\end{aligned}
\end{eqnarray*}
that is
\begin{align}
\overline{N}(r,\frac{1}{g-a_{2}})\leq T(r,\phi)+S(r,f),
\end{align}
a contradiction. Similarly, we can also obtain
\begin{align}
T(r,\psi)=\overline{N}(r,\frac{1}{g-a_{1}})+S(r,f).
\end{align}

By Lemma 2.5, if
\begin{align}
g=Hh+G,
\end{align}
where $H\not\equiv0$ is a small function of $h$.\\

Rewrite (3.16) as
\begin{align}
\phi\equiv\frac{g'(f-a)(f-b)-f'(g-a_{1})(g-a_{2})}{(f-a_{1})(f-a_{2})(g-a_{1})(g-a_{2})}.
\end{align}
Set $a=\frac{h'}{h}$. Since $N(r,h)+N(r,\frac{1}{h})=S(r,f)$, we obtain from Lemma 2.1 that
$$T(r,a)=m(r,\frac{h'}{h})+N(r,\frac{h'}{h})=S(r,f),$$
which implies that $a$ is a small function of $f$.

Combing (2.1) with (3.44),  we can set
\begin{align}
P&=g'(f-a)(f-b)-f'(g-a_{1})(g-a_{2})\notag\\
&=\sum_{i=0}^{5}\alpha_{i}h^{i},
\end{align}
and
\begin{align}
Q&=(f-a_{1})(f-a_{2})(g-a_{1})(g-a_{2})\notag\\
&=\sum_{j=0}^{6}\beta_{j}h^{j},
\end{align}
where $\alpha_{i}$ and $\beta_{j}$ are small functions of $h$, and $\alpha_{5}\not\equiv0$, $\beta_{6}\not\equiv0$.

If $P$ and $Q$ are two mutually prime polynomials in $e^{p}$, then by Lemma 2.9 we can get $T(r,\phi)=6T(r,h)+S(r,f)$. It follows from (3.10), (3.45)-(3.46) that $T(r,f)=S(r,f)$, a contradiction.\\

If $P$ and $Q$ are  not two mutually prime polynomials in $h$, it's easy to see that the degree of $Q$ is large than $P$.\\
According to (3.38), (3.45), (3.46) and by simple calculation,  we must have
\begin{align}
\phi=\frac{C}{g-a_{2}},
\end{align}
where $C\not\equiv0$ is a small function of $f$.\\
Put (3.47) into (3.16) we have
\begin{align}
\frac{C(g-a_{1})-g'}{(g-a_{1})(g-a_{2})}\equiv\frac{-f'}{(f-a_{1})(f-a_{2})}.
\end{align}
By (3.43) and (3.48), we claim that $CH\equiv DH+aH$. Otherwise, combining (3.16), (3.38),(3.43) and Lemma 2.8, we can get $T(r,h)=S(r,f)$. It follows from  (3.10) and (3.21) that $T(r,f)=S(r,f)$, a contradiction.  Then by (3.1), (3.48) and $CH\equiv H'+aH$, we have
\begin{align}
\frac{G'-C(G-a_{1})}{Hh+G-a_{2}}\equiv\frac{(2aH+H')h+a(G-a_{1})+G'}{h(Hh+G-a_{1})+a_{1}-a_{2}}.
\end{align}
From above equality and $CH\equiv H'+aH$, we obtain the followings equalities.
\begin{align}
A\equiv(a+C)H,
\end{align}
\begin{align}
[a(G-a_{1})+G'](G-a_{2})\equiv A(a_{1}-a_{2}),
\end{align}
and
\begin{align}
a(G-a_{1})+G'+(a+C)(G-a_{2})\equiv (a+C)(G-a_{1}),
\end{align}
where $A\equiv G'-C(G-a_{1})$. By (3.50)-(3.52) we have
\begin{align}
a_{2}\equiv G+H.
\end{align}
Differential above we get
\begin{align}
(H+G)' \equiv 0,
\end{align}
which implies
\begin{align}
(C-a)H+(a+C)H+C(G-a_{1})\equiv C(a_{2}+H-a_{1})\equiv0.
\end{align}
Therefore, we can see from (3.53) and (3.55) that
\begin{align}
G=2a_{2}-a_{1},
\end{align}
it follows from (3.53) that
\begin{align}
H=a_{1}-a_{2}.
\end{align}
Combining (3.43), (3.56) and (3.57), we have
\begin{align}
g=(a_{1}-a_{2})h+2a_{2}-a_{1}.
\end{align}
And then by (2.1) we have
\begin{align}
f=a_{2}+(a_{1}-a_{2})(h-1)^{2}.
\end{align}

If $m(r,h)=S(r,f)$, then by (3.10) and (3.21), we deduce $T(r,f)=S(r,f)$, a contradiction.\\

This completes the proof of Theorem 1.

\section{The proof of Corollary 1}
Assume that $f\equiv g$. Set
$F=\frac{f-a_{1}}{a_{2}-a_{1}}$ and $G=\frac{g-a_{1}}{a_{2}-a_{1}}$. We know that $F$ and $G$ share $0$ CM almost and $1$ IM almost. Obviously, we know that $G$ is still a differential-difference polynomial in $F$. Then by Theorem 1 and Remark 1, we have
\begin{align}
G=(a_{1}-a_{2})h+2a_{2}-a_{1},
\end{align}
and
\begin{align}
F=a_{2}+(a_{1}-a_{2})(h-1)^{2}.
\end{align}
Therefore, if $g=f^{(k)}(z+c)$,   since  $f$ is a transcendental meromorphic function with $\rho_{2}(f)<1$ and $f^{(k)}_{c}$ and $f$ share  $ \infty$ CM, we can see from  Lemma 2.1 and Lemma 2.3 that
\begin{eqnarray*}
\begin{aligned}
(1+o(1))N(r,f)+S(r,f)=N(r,f_{c})=N(r,f^{(k)}_{c}),
\end{aligned}
\end{eqnarray*}
and on the other hand
\begin{eqnarray*}
\begin{aligned}
k\overline{N}(r,f_{c})+N(r,f_{c})=N(r,f^{(k)}_{c}),  \overline{N}(r,f_{c})=\overline{N}(r,f^{(k)})=\overline{N}(r,f),
\end{aligned}
\end{eqnarray*} 
which follows from above equalities that $\overline{N}(r,f)=S(r,f)$. By Theorem 1, we have 
\begin{align}
f=a_{2}+(a_{1}-a_{2})(e^{p}-1)^{2},
\end{align}
and 
\begin{align}
f^{(k)}_{c}=(a_{1}-a_{2})e^{p}+2a_{2}-a_{1},
\end{align}
where the reason $h=e^{p}$ with $p$ a non-constant polynomial is that $f^{(k)}_{c}$ and $f$ share  $a_{1}, \infty$ CM, and by (2.1) we know that $h$ is an entire function.

By (4.3) we have
\begin{align}
f^{(k)}_{c}=P(z)e^{2p_{c}}+Q(z)e^{p_{c}}+a_{2}^{(k)},
\end{align}
where $P(z)\not\equiv0$ and $Q(z)\not\equiv0$ are differential polynomial in $$A_{c}, A'_{c},\ldots,A^{(k)}, p_{c},p'_{c},\ldots,p^{(k)},$$
and $A=a_{1}-a_{2}$. So we can not obtain (4.4) from (4.3).

And hence from above discussions, we only obtain $f\equiv f^{(k)}_{c}$.

\section{The Proof of Theorem 2}
 If $f(z)\equiv f(z+c)$, there is nothing to do. Assume that $f(z)\not\equiv f(z+c)$. Since $f(z)$ is a transcendental meromorphic function of $\rho_{2}(f)<1$, $f$ and $f(z+c)$ share $a_{1}(z),\infty$ CM, then there is a nonzero entire function $p(z)$  of order less than $1$ such that
\begin{eqnarray}
\frac{f(z+c)-a_{1}(z)}{f(z)-a_{1}(z)}=e^{p(z)},
\end{eqnarray}
then by Lemma 2.1 and $a(z)$ is a periodic function with period $c$,
\begin{eqnarray}
T(r,e^{p})=m(r,e^{p})=m(r,\frac{f(z+c)-a_{1}(z+c)}{f(z)-a_{1}(z)})=S(r,f).
\end{eqnarray}
On the other hand, (4.1) can be rewritten as
\begin{eqnarray}
\frac{f(z+c)-f(z)}{f(z)-a_{1}(z)}=e^{p(z)}-1.
\end{eqnarray}
We put
\begin{align}
\varphi(z)=\frac{L(f)(f(z+c)-f(z))}{(f(z)-a_{1}(z))(f(z)-a_{2}(z))},
\end{align}
where $L(f)$ is defined as in Lemma 2.13. Then by Lemma 2.1, Lemma 2.13 and the fact that $f(z)$ and $f(z+c)$ share $a_{1}(z)$ and $\infty$ CM, and $a_{2}(z)$ IM, we have
\begin{eqnarray*}
\begin{aligned}
T(r,\varphi(z))&=m(r,\varphi(z))+N(r,\varphi(z))\\
&\leq m(r,\frac{L(f)f(z)}{(f(z)-a_{1}(z))(f(z)-a_{2}(z))})+m(r,\frac{f(z+c)-f(z)}{f(z)})\\
&+\overline{N}(r,f(z))+S(r,f)\leq \overline{N}(r,f(z))+S(r,f),
\end{aligned}
\end{eqnarray*}
which implies $T(r,\varphi(z))\leq \overline{N}(r,f(z))+S(r,f)$. If $T(r,\varphi(z))< \overline{N}(r,f(z))+S(r,f)$, then we can know from the fact  $f(z)$ and $f(z+c)$ share  $\infty$ CM and the definition of $\varphi(z)$ that $\overline{N}(r,f(z))=S(r,f)$. Then we can see from Theorem G and Remark 1 that $f(z)\equiv f(z+c)$. Hence
\begin{align}
T(r,\varphi(z))= \overline{N}(r,f(z))+S(r,f).
\end{align}
Because the zeros of $f(z+c)-f(z)$ but not the zeros of $f(z)-a(z)$ nor $f(z)-b(z)$ are the zeros of $e^{p(z)}-1$, and hence by (5.4) we have
\begin{align}
N(r,\frac{1}{\varphi(z)})&= N(r,\frac{1}{L(f)})+N_{(2}(r,\frac{1}{f-a_{1}})+N_{(m,n)}(r,\frac{1}{f-a_{2}})\notag\\
&-\overline{N}_{(2}(r,\frac{1}{f-a_{1}})-\overline{N}_{(m,n)}(r,\frac{1}{f-a_{2}})+S(r,f).
\end{align}
On the other hand, we know from (5.1)-(5.4), Nevanlinna's first fundamental theorem, Lemma 2.2 and Lemma 2.13 that
\begin{eqnarray*}
\begin{aligned}
m(r,\frac{1}{\varphi(z)})&= m(r,\frac{1}{\varphi(z)(e^{p(z)}-1)})+S(r,f)=m(r,\frac{f-a_{2}}{L(f)})+S(r,f)\\
&=m(r,\frac{L(f)}{f-b})+N(r,\frac{L(f)}{f-a_{2}})-N(r,\frac{f-a_{2}}{L(f)})+S(r,f)\\
&=N(r,L(f))+N(r,\frac{1}{f-a_{2}})-N(r,f)-N(r,\frac{1}{L(f)})+S(r,f)\\
&=\overline{N}(r,f)+N(r,\frac{1}{f-a_{2}})-N(r,\frac{1}{L(f)})+S(r,f),
\end{aligned}
\end{eqnarray*}
the first equality holds because $T(r,e^{p})=S(r,f)$, so it follows from above that
\begin{align}
m(r,\frac{1}{\varphi(z)})=\overline{N}(r,f)+N(r,\frac{1}{f-a_{2}})-N(r,\frac{1}{L(f)})+S(r,f).
\end{align}
It is easy to see from  (5.5)-(5.7) that
\begin{eqnarray*}
\begin{aligned}
&\overline{N}(r,f(z))=T(r,\varphi(z))+S(r,f)=m(r,\frac{1}{\varphi(z)})+N(r,\frac{1}{\varphi(z)})+S(r,f)\\
&=\overline{N}(r,f(z))+N(r,\frac{1}{f(z)-a_{2}(z)})-N(r,\frac{1}{L(f)})+N(r,\frac{1}{L(f)})\\
&+N_{(2}(r,\frac{1}{f(z)-a_{1}(z)})+N_{(m,n)}(r,\frac{1}{f(z)-a_{2}(z)})-\overline{N}_{(2}(r,\frac{1}{f(z)-a_{1}(z)})\\
&-\overline{N}_{(m,n)}(r,\frac{1}{f(z)-a_{2}(z)})+S(r,f)=\overline{N}(r,f(z))+N(r,\frac{1}{f(z)-a_{2}(z)})+N_{(2}(r,\frac{1}{f(z)-a_{1}(z)})\\
&+N_{(m,n)}(r,\frac{1}{f(z)-a_{2}(z)})-\overline{N}_{(2}(r,\frac{1}{f(z)-a_{1}(z)})-\overline{N}_{(m,n)}(r,\frac{1}{f(z)-a_{2}(z)})+S(r,f),
\end{aligned}
\end{eqnarray*}
which implies
\begin{align}
N_{(2}(r,\frac{1}{f(z)-a_{1}(z)})+N(r,\frac{1}{f(z)-a_{2}(z)})=S(r,f).
\end{align}
We can know from (5.8), Lemma 2.1 and Lemma 2.3 that
\begin{align}
N(r,\frac{1}{f(z+c)-a_{2}(z+c)})=N(r,\frac{1}{f(z)-a_{2}(z)})+S(r,f)=S(r,f).
\end{align}

Set
\begin{eqnarray}
\psi(z)=\frac{f(z+c)-a_{2}(z+c)}{f(z)-a_{2}(z)}.
\end{eqnarray}
It is easy to see that
\begin{eqnarray}
N(r,\frac{1}{\psi(z)})\leq N(r,\frac{1}{f(z+c)-a_{2}(z+c)})+N(r,a_{2}(z))= S(r,f),
\end{eqnarray}
\begin{eqnarray}
N(r,\psi(z))\leq N(r,\frac{1}{f(z)-a_{2}(z)})+N(r,a_{2}(z+c))= S(r,f).
\end{eqnarray}
Hence by Lemma 2.1,
\begin{align}
T(r,\psi(z))&=m(r,\psi(z))+N(r,\psi(z))\notag\\
&\leq m(r,\frac{f(z+c)-a_{2}(z+c)}{f(z)-a_{2}(z)})+N(r,\frac{1}{f(z)-a_{2}(z)})\notag\\
&+S(r,f)\leq S(r,f).
\end{align}
Subtracting (5.10) from (5.1), we have
\begin{eqnarray}
(e^{p(z)}-\psi(z))f(z)+\psi(z)a_{2}(z)+a_{1}(z)-a_{2}(z+c)-a_{1}(z)e^{p(z)}\equiv0.
\end{eqnarray}
We discuss following two cases.\\

{\bf Case 1} \quad $e^{p(z)}\not\equiv\psi(z)$. Then by (4.2), (4.10) and (4.13) we obtain $T(r,f)=S(r,f)$, a contradiction.\\

{\bf Case 2} \quad $e^{p(z)}\equiv\psi(z)$. Then by (5.1) we have
\begin{eqnarray}
f(z+c)=e^{p(z)}(f(z)-a_{1}(z))+a_{1}(z),
\end{eqnarray}
and
\begin{eqnarray}
N(r,\frac{1}{f(z+c)-a_{2}(z)})=N(r,\frac{1}{f(z)-a_{1}(z)+\frac{a_{1}(z)-a_{2}(z)}{e^{p(z)}}})=S(r,f).
\end{eqnarray}
If $a_{2}(z)$ is a periodic function of period $c$, then by (5.11) we can get $e^{p(z)}\equiv1$, which implies $f(z)\equiv f(z+c)$, a contradiction. Obviously, $a_{1}(z)-\frac{a_{1}(z)-a_{2}(z)}{e^{p(z)}}\not\equiv a_{1}(z)$. Otherwise, we can deduce $a_{1}(z)\equiv a_{2}(z)$, a contradiction.\\

Next, we discuss three Subcases.

{\bf Subcase 2.1}\quad $a_{1}(z)-\frac{a_{1}(z)-a_{2}(z)}{e^{p(z)}}\not\equiv a_{2}(z)$ and $a_{1}(z)-\frac{a_{1}(z)-a_{2}(z)}{e^{p(z)}}\not\equiv a_{2}(z-c)$. Then according to (5.5), (5.6),(5.16) and Lemma 2.12, we can get $T(r,f)=S(r,f)$, a contradiction.\\

{\bf Subcase 2.2}\quad $a_{1}(z)-\frac{a_{1}(z)-a_{2}(z)}{e^{p(z)}}\equiv a_{2}(z)$, but $a_{1}(z)-\frac{a_{1}(z)-a_{2}(z)}{e^{p(z)}}\not\equiv a_{2}(z-c)$. It follows that $e^{p(z)}\equiv1$. Therefore by (5.1) we have $f(z)\equiv f(z+c)$, a contradiction.

{\bf Subcase 2.3}\quad $a_{1}(z)-\frac{a_{1}(z)-a_{2}(z)}{e^{p(z)}}\not\equiv a_{2}(z)$ and $a_{1}(z)-\frac{a_{1}(z)-a_{2}(z)}{e^{p(z)}}\equiv a_{2}(z-c)$. It is easy to see that
\begin{eqnarray}
\frac{a_{1}(z)-a_{2}(z)}{a_{1}(z-c)-a_{2}(z-c)}=e^{p(z)}.
\end{eqnarray}
Furthermore, (5.14) implies
\begin{eqnarray}
\frac{a_{1}(z+c)-a_{2}(z+c)}{a_{1}(z)-a_{2}(z)}=e^{p(z)},
\end{eqnarray}
\begin{eqnarray}
\frac{a_{1}(z)-a_{2}(z)}{a_{1}(z-c)-a_{2}(z-c)}=e^{p(z-c)}.
\end{eqnarray}
It follows from (5.17) and (5.19) that
\begin{eqnarray}
e^{p(z)}=e^{p(z+c)}.
\end{eqnarray}
We also set
\begin{align}
F(z)=\frac{f(z)-a_{1}(z)}{a_{2}(z)-a_{1}(z)}, \quad G(z)=\frac{f(z+c)-a_{1}(z)}{a_{2}(z)-a_{1}(z)}.
\end{align}
Since $f(z)$ and $f(z+c)$ share $a_{1}(z)$ and $\infty$ CM, and $a_{2}(z)$ IM, so $F(z)$ and $G(z)$ share $0,\infty$ CM almost, and $1$ IM almost. We claim that $F$ is not a bilinear transform of $G$. Otherwise, we can see from Lemma 2.9 that if (i) occurs, we have $F(z)\equiv G(z)$, that is $f(z)\equiv f(z+c)$. If (ii) occurs, we have 
\begin{align}
N(r,\frac{1}{f(z)-a_{1}(z)})=S(r,f ), \quad N(r,f(z))=S(r,f).
\end{align}
Then by (5.8), (5.22) and Lemma 2.13, we can get  $T(r,f)=S(r,f)$, a contradiction.\\
If (iii) occurs, we have
\begin{align}
N(r,\frac{1}{f(z)-a_{1}(z)})=S(r,f ), \quad N(r,\frac{1}{f(z)-a_{2}(z)})=S(r,f).
\end{align}
Then it follows from above, $a_{1}(z)-\frac{a_{1}(z)-a_{2}(z)}{e^{p(z)}}\not\equiv a_{1}(z)$, $a_{1}(z)-\frac{a_{1}(z)-a_{2}(z)}{e^{p(z)}}\not\equiv a_{2}(z)$ and Lemma 2.13 that  $T(r,f)=S(r,f)$, a contradiction.\\
If (iv) occurs, we have $F(z)\equiv jG(z)$, that is 
\begin{align}
f(z)-a_{1}(z)=j(f(z+c)-a_{1}(z)),
\end{align}
where $j\neq0,1$ is a finite constant. By (5.1) and (5.24), we have $e^{p(z)}\equiv j$.  If $a_{2}(z+c)\not\equiv a_{2}(z-c)$, then by (5.9), (5.16) and Lemma 2.13, we get $T(r,f)=S(r,f)$, a contradiction. Thus, we have $a_{2}(z+c)\equiv a_{2}(z-c)$. Moreover, since $a(z)$ is a periodic function with period $c$, we can deduce $j^{2}=1$ from $e^{p(z)}\equiv j$ and (5.18)-(5.19). If $j=1$, we have $f(z+c)\equiv f(z)$, a contradiction. If $j=-1$, we obtain $f(z+c)=2a_{1}(z)-f(z)$. Then by Lemma 2.10, we know that either $f(z)\equiv f(z+c)$, a contradiction. Or $N(r,f(z))=S(r,f)$, and in this case  we can see from Theorem G and Remark 1 that $f(z)\equiv f(z+c)$, a contradiction.\\

If (v) occurs, we have 
\begin{align}
N(r,\frac{1}{f(z)-a_{1}(z)})=S(r,f ).
\end{align}
Then by Lemma 2.12, (5.9), (5.16) and $a_{2}(z-c)\not\equiv a_{1}(z)$, we obtain $T(r,f)=S(r,f)$, a contradiction.\\
If (vi) occurs, we have 
\begin{align}
 N(r,f(z))=S(r,f).
\end{align}
And hence  we can see from Theorem G and Remark 1  that $f(z)\equiv f(z+c)$, a contradiction.\\

Therefore, $F(z)$ is not a linear fraction transformation of $G(z)$.   If $a_{2}(z)$ is a small function with period $2c$, that is $a_{2}(z+c)\equiv a_{2}(z-c)$, we can set
\begin{align}
&D(z)=(f(z)-a_{2}(z))(a_{2}(z)-a_{2}(z-c))-(f(z+c)-a_{2}(z+c))(a_{2}(z+c)-a_{2}(z))\notag\\
&=(f(z)-a_{2}(z-c))(a_{2}(z)-a_{2}(z-c))-(f(z+c)-a_{2}(z))(a_{2}(z+c)-a_{2}(z))
\end{align}
If $D(z)\equiv0$, then we have $f(z+c)\equiv a_{2}(z+c)+a_{2}(z)-f(z)$, that is to say $F(z)$ is  a linear fraction transformation of $G(z)$, a contradiction. Hence $D(z)\not\equiv0$, and by (5.9)-(5.10), (5.16) and Lemma 2.1, we have
\begin{align}
2T(r,f(z))&=m(r,\frac{1}{f(z)-a_{2}(z)})+m(r,\frac{1}{f(z)-a_{2}(z-c)})+S(r,f)\notag\\
&=m(r,\frac{1}{f(z)-a_{2}(z)}+\frac{1}{f(z)-a_{2}(z-c)})+S(r,f)\notag\\
&\leq m(r,\frac{D(z)}{f(z)-a_{2}(z)}+\frac{D(z)}{f(z)-a_{2}(z-c)})+m(r,\frac{1}{D(z)})+S(r,f)\notag\\
&\leq m(r,\frac{1}{(\psi (z)+1)(f(z)-a_{2}(z))(a_{2}(z)-a_{2}(z-c))})+S(r,f)\notag\\
&\leq m(r,\frac{1}{f(z)-a_{2}(z)})+S(r,f)\leq T(r,f)+S(r,f),
\end{align}
which implies $T(r,f)=S(r,f)$, a contradiction.

By (5.18) we have
\begin{align}
\frac{\Delta_{c}a_{2}(z)}{1-e^{p(z)}}+a_{2}(z)=a_{1}(z).
\end{align}
Combining (5.20) and the fact that $a(z)$ is a small function with period $c$, we can get
\begin{align}
\frac{\Delta_{c}a_{2}(z+c)}{1-e^{p(z)}}+a_{2}(z+c)=a_{1}(z).
\end{align}
According to (5.29) and (5.30), we obtain
\begin{align}
e^{p(z)}=\frac{a_{2}(z+2c)-a_{2}(z+c)}{\Delta_{c}^{2}a_{2}(z)}.
\end{align}
So if $\rho(b(z))<\rho(e^{p(z)})$, we can follows from (5.31) and Lemma 2.11 that
\begin{align}
\rho(e^{p(z)})=\rho(\frac{a_{2}(z+2c)-a_{2}(z+c)}{\Delta_{c}^{2}a_{2}(z)})\leq \rho(a_{2}(z))<\rho(e^{p(z)}),
\end{align}
which is a contradiction.

If If $\rho(a_{2}(z))<1$, we claim that $p(z)\equiv B$ is a non-zero constant. Otherwise, the order of right hand side of (5.31)is $0$, but the left hand side is $1$, which is impossible. Therefore, by (5.1) we know that $f(z+c)-a_{1}(z)=B(f(z)-a_{1}(z))$, that is to say $F(z)$ is a linear fraction transformation of $G(z)$, a contradiction.

This completes Theorem 2.

\

{\bf Acknowledgements} The author would like to thank to  referee  for his helpful comments and also to the previous referee for giving the Example 4. The author also would like to express his grateful to Tohge Kazuya for communication about this paper.

\end{document}